\newcommand{\leqnos}{\tagsleft@true\let\veqno\@@leqno}
\newcommand{\reqnos}{\tagsleft@false\let\veqno\@@eqno}
\definecolor{orange}{rgb}{1,0.5,0}
\definecolor{Red}{rgb}{.795,0.015,0.017}
\definecolor{Ggreen}{rgb}{0.,0.675,0.0128}
\definecolor{Bblue}{rgb}{0.16,.32,0.91}
\def\cC{\mathcal C}
\newcommand{\cT}{\mathcal T}
\def\cS{\mathcal S}
\newcommand*{\B}[1]{\ifmmode\bm{#1}\else\textbf{#1}\fi}
\def\AA{\mathbb{A}}
\def\CC{\mathbb{C}}
\def\FF{\mathbb{F}}
\def\NN{\mathbb{N}}
\renewcommand{\NN}{\mathbb{Z}_{\ge 0}}
\def\ZZ{\mathbb{Z}}
\def\QQ{\mathbb{Q}}
\newcommand{\sdfrac}[2]{\mbox{\small$\displaystyle\frac{#1}{#2}$}}
\newcommand{\balpha}{\boldsymbol{{\alpha\xspace}}} 
\DeclareMathOperator{\wt}{\mathtt{Hwt}} 
\newcommand{\Crec}{$\mathrm{C}$-recursive\xspace} 
\newcommand{\bA}{\textbf{A}}
\DeclareMathOperator{\Auto}{\bA} 
\newcommand*\dd{\mathop{}\!\mathrm{d}}
\renewcommand{\pmod}[1]{\left( \mathrm{ mod\;}#1\right)}
\theoremstyle{plain}
\newtheorem{theorem}{Theorem}
\newtheorem{proposition}{Proposition}[section]
\theoremstyle{remark}
\newtheorem{remark}{Remark}[section]
\newtheorem*{remark*}{Remark}
\theoremstyle{definition}
\newtheorem{example}{Example}[section]
\renewcommand*{\backref}[1]{}
\renewcommand*{\backrefalt}[4]{%
  \ifcase #1 %
No citations.
  \or
(page #2).%
  \else
(pages #2).%
  \fi%
}
\begin{document}

\title[On non-holonomicity, transcendence and $p$-adic valuations]
{On non-holonomicity, transcendence and $p$-adic valuations}

\author[C. Cobeli, M. Prunescu A. Zaharescu]{Cristian Cobeli, Mihai Prunescu, Alexandru Zaharescu}

\address{CC: ``Simion Stoilow'' Institute of Mathematics of the Romanian Academy,~21 Calea Griviței Street, P.O. Box 1-764, Bucharest 014700, Romania}
\email{cristian.cobeli@imar.ro}

\address{MP: Research Center for Logic, Optimization and Security (LOS), Faculty of Mathematics and Computer Science, University of Bucharest, Academiei 14, Bucharest (RO-010014), Romania; and ``Simion Stoilow'' Institute of Mathematics of the Romanian Academy, Research unit 5, P.O. Box 1-764, Bucharest (RO-014700), Romania.}
\email{mihai.prunescu@imar.ro}

\address{
AZ: Department of Mathematics,
University of Illinois at Urbana-Champaign,
Altgeld Hall, 1409 W. Green Street, Urbana, IL, 61801, USA and 
``Simion Stoilow'' Institute of Mathematics of the Romanian Academy, 
P.O. Box 1-764, RO-014700 Bucharest, Romania}
\email{zaharesc@illinois.edu}

\makeatletter
\@namedef{subjclassname@2020}{%
  \textup{2020} Mathematics Subject Classification}
\makeatother
\subjclass[2020]{Primary 11B37; Secondary  11J81. 
}

\keywords{$p$-adic valuation,   non-holonomic series, transcendental numbers,  arithmetic terms, automatic sequences, the period-doubling sequence.}

\begin{abstract}
Let $\nu_q(n)$ be the $p$-adic valuation of $n$. We show that the power series with \mbox{coefficients}  $\nu_q(n)$, respectively $\nu_p(n) \pmod k$, are non-holonomic and not algebraic in characteristic $0$. We find infinitely many rational numbers and infinitely many algebraic \mbox{irrational} numbers for which the values of these series are transcendental. We apply these results to some $p$-automatic sequences, one of them being the period-doubling sequence.  
\end{abstract}
\maketitle

\section{Introduction}
For any prime $p$ we denote by $\nu_p(n)$ the \emph{$p$-adic 
valuation} of the integer $n\ge 1$.
Thus, if $n=p^sm$ and $\gcd(p,m)=1$, then  $\nu_p(n)=s$.
The $p$-adic valuation is essential in expressing any Kalmar elementary function as an arithmetic term. 
As proven by Mazzanti~\cite{Mazz2002} for $p = 2$ (also see Marchenkov~\cite{Mar2006}), 
and recently generalized in  \cite{PSAprep} for all larger primes
(also see~\cite{PS2024}), the value of $\nu_p (n)$ can be written as
\begin{align*}
  \nu_p(n) = \left\lfloor\frac{\big(\gcd(n,p^n)\big)^{n+1}\pmod{(p^{n+1}-1)^2}}{p^{n+1}-1}\right\rfloor.
\end{align*}

According to Mazzanti~\cite[Lemma 3.3 and Lemma 4.2]{Mazz2002}, the functions  $\gcd(a,b)$ and $\binom{2n}{n}$,  can also be expressed through certain such arithmetic formulas.
It is then a consequence of Kummer's theorem that the \emph{Hamming weight} $\wt(n)$, which is defined as the number of digits equal with~$1$ in the binary representation of $n\ge 1$,
is expressible by the arithmetic term
\begin{align*}
  \wt(n) = \nu_2 \binom{2n}{n}.   
\end{align*}
We note that the Hamming weight function is essential in expressing the number of solutions of an exponential Diophantine equation with parameters as an arithmetic term depending on these parameters. 
Following that, it is conceivable that one could optimize the construction of arithmetic terms for Kalmar elementary functions 
if one could find simpler arithmetic terms to express the functions $\nu_2(n)$ and 
$\binom{2n}{n}$. 
Along these lines, our object in this article is to study the $p$-adic valuation $\nu_p(n)$ 
from the point of view of the generating function of the sequence 
$\{\nu_p(n)\}_{n\ge 1}$ for any prime $p$.

For any prime $p$, the \emph{generating function} $V_p(X)$ of
the sequence $\{\nu_p(n)\}_{n\ge 1}$  is the following formal power series in $X$:
\begin{equation}\label{eqDefV}
    V_p(X) = \sum_{n=1}^\infty \nu_p(n)X^n\,.
\end{equation} 

The radius of convergence of $V_p(X)$ at $0$ is $R = 1$.
Then, since $1/2 < R$, and 
$\{\nu_p(n)\}_{n\ge 1}$ 
is a sequence of integers satisfying 
\mbox{$\nu_p(n) \leq \log_p n$} 
for all~\mbox{$n \geq 1$},
according \mbox{to~\cite[Theorem 2]{PSAxiv}}, we have the following uniform representation of the function $\nu_p(n)$ for all primes $p \geq 2$:
\begin{align*}
  \nu_p(n) = \left \lfloor 2^{n^2} V_p(2^{-n}) \right \rfloor \pmod{ 2^{n}}
  \ \text{ for all $n \geq 1$.}  
\end{align*}

However, the results of the present article seriously diminish the hope that one could transform the expression above into some usable tool to compute the $p$-adic valuation. Thus, in Theorem~\ref{Theorem1}  we prove that the function $V_p(X)$ is non-holonomic. Stated differently, not only that the series is not rational or algebraic, but it is far from any combination of familiar analytic functions. Also, we show in Theorem~\ref{Theorem2} that if $X$ is taken from a certain large class of rational numbers then the value $V_p(X)$ is transcendental. 

\smallskip
Our results entail a broader potential and can be applied to the following objects.
Let $q \geq 2$ be an integer that is not necessarily prime.
Using the same notation as for the valuation function, for every integer $n \geq 1$ we let
\begin{align*}
  \nu_q(n) := \max \{ k \in \NN \,:\,  q^k \,|\, n\}.  
\end{align*}
Given this more general function $\nu_q(n)$, 
let $V_q(X)$ be defined as before by formula~\eqref{eqDefV}. 
Then, for any integer $k \ge 2$, let $V_{q,k}(X)$ be the reduced modulo $k$ version of $V_q(X)$, which is formally obtained by reducing all coefficients modulo $k$, that is,
\begin{align}\label{eqDefVpk}
 V_{q,k}(X) := \sum_{n=1}^\infty 
 \big(\nu_q(n) \pmod k\big) X^n\,,    
\end{align}
where the coefficients are understood to be the unique integers in the interval $[0,k-1]$ that are congruent to $\nu_q(n)$ modulo $k$.
We are now ready to state our main results.

\begin{theorem}\label{Theorem1}
The series $V_q(X)$ and $V_{q,k}(X)$ defined by~\eqref{eqDefV} and~\eqref{eqDefVpk}
 are both non-holonomic for every integers $q, k \geq 2$. 
\end{theorem}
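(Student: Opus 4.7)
The plan is to attack the two series separately, exploiting in each case a self-similarity coming from the recursion $\nu_q(qm)=\nu_q(m)+1$ valid for every integer $m\ge 1$. For $V_q$, I would first establish a functional equation by splitting the defining sum according to whether $q\mid n$. The terms with $q\nmid n$ contribute zero, and for $n=qm$ one has $\nu_q(n)=\nu_q(m)+1$, giving
\[
V_q(X)=\sum_{m\ge 1}(\nu_q(m)+1)X^{qm}=V_q(X^q)+\frac{X^q}{1-X^q}.
\]
Iterating, and using $V_q(X^{q^N})\to 0$ as $N\to\infty$ inside the unit disk, one obtains the Lambert-type expansion
\[
V_q(X)=\sum_{i=1}^{\infty}\frac{X^{q^i}}{1-X^{q^i}}.
\]
A direct residue computation (the residue of the $i$-th summand at $\zeta$ is $-\zeta/q^i$ whenever $\zeta^{q^i}=1$) shows that this series has a genuine simple pole at every root of unity $\zeta$ whose multiplicative order is a power of $q$, since the geometric tail of residues is nonzero. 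Such $\zeta$ are dense on the unit circle, so $V_q$ admits infinitely many singular points in $\mathbb{C}$. Since a holonomic function, being a solution of a linear ODE $p_d(X)f^{(d)}+\cdots+p_0(X)f=0$ with polynomial coefficients, can have only finitely many singularities (they must lie among the zeros of $p_d$), $V_q(X)$ cannot be holonomic; in fact $|X|=1$ is a natural boundary.

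For $V_{q,k}$ the singularity argument does not apply directly, because the coefficients $\nu_q(n)\pmod k$ are bounded by $k-1$, so the series converges on $|X|<1$ without producing any obvious explicit poles on the boundary. Instead, I would invoke the classical rigidity theorem of P\'olya-B\'ezivin: a bounded holonomic integer sequence is eventually periodic (equivalently, its generating function is rational). It then suffices to rule out eventual periodicity of $\big(\nu_q(n)\pmod k\big)_{n\ge 1}$. If a period $T$ existed for $n\ge N_0$, then comparing $\nu_q(q^N)=N$ with $\nu_q(q^N+T)=\nu_q(T)$ for every $N$ with $q^N>T$ forces $N\equiv\nu_q(T)\pmod k$ for all sufficiently large $N$, which is absurd.

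The main obstacle is not the non-periodicity step, which is elementary, but invoking the B\'ezivin-P\'olya rigidity theorem in the generality needed here. A self-contained alternative would be to observe first that $\big(\nu_q(n)\pmod k\big)_{n\ge 1}$ is $q$-automatic (a finite automaton reading the base-$q$ digits of $n$ from the low end and tracking the length of the trailing run of zeros modulo $k$ computes it), and then to cite the Allouche-Shallit-B\'ezivin theorem that a non-eventually-periodic $q$-automatic sequence has non-holonomic (and in fact transcendental) generating function over $\mathbb{Q}$.
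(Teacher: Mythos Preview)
Your argument is essentially correct, but it diverges from the paper's proof in both halves, and there are a couple of imprecisions worth flagging.

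For $V_q$, the paper argues in two cases via the P\'olya--Carlson theorem: it first rules out rationality by a short combinatorial contradiction (if $\nu_q$ were eventually $\mathrm{C}$-recursive of order~$d$, one exhibits $m_1,m_2$ with identical length-$d$ histories but $\nu_q(m_1)\neq\nu_q(m_2)$), and then invokes P\'olya--Carlson to force a natural boundary, which is incompatible with holonomicity. Your route via the Lambert expansion $V_q(X)=\sum_{i\ge1}X^{q^i}/(1-X^{q^i})$ bypasses P\'olya--Carlson by exhibiting the dense singular set directly; this is arguably more self-contained. However, the phrase ``genuine simple pole'' is not accurate: $V_q$ does not extend meromorphically across the unit circle, so there is no pole to speak of, and summing residues of the individual rational summands is only a heuristic. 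The rigorous version is a radial-limit argument: if $\zeta$ is a primitive $q^j$-th root of unity then for $i\ge j$ one has $(r\zeta)^{q^i}=r^{q^i}$, so each such term in the Lambert sum tends to $+\infty$ as $r\to 1^-$ while the finitely many terms with $i<j$ stay bounded; hence $V_q(r\zeta)\to\infty$ and $\zeta$ is a genuine singularity.

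For $V_{q,k}$, the paper again splits into the rational and non-rational cases, handling the latter by P\'olya--Carlson and the former by the same combinatorial obstruction to $\mathrm{C}$-recursivity (not merely to periodicity). Your reduction to eventual periodicity is valid, but the result you invoke is not standardly called the ``P\'olya--B\'ezivin theorem'', and your alternative citation ``Allouche--Shallit--B\'ezivin'' is likewise not a recognised label. The statement that a holonomic integer sequence taking finitely many values must be eventually periodic is true, but it is really P\'olya--Carlson (giving rationality) followed by Fatou's lemma and Kronecker's theorem (forcing the poles to sit at roots of unity and be simple). So your path for $V_{q,k}$ ends up using at least as much machinery as the paper's, packaged less transparently; the paper's direct contradiction to $\mathrm{C}$-recursivity is shorter once one has P\'olya--Carlson anyway. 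Your non-periodicity step (comparing $\nu_q(q^N)$ with $\nu_q(q^N+T)$) is clean and correct.
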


\begin{theorem}\label{Theorem2}
Let  $a$ and $b$ be relatively prime positive integers with ratio $a/b<1$.
Then, the real numbers $V_q\big(a/b\big)$ and $V_{q,k}\big(a/b\big)$ are transcendental for all 
integers $q, k \geq 2 $ satisfying condition
\begin{equation}\label{eqTheorem1}
    q > \frac{2\log b}{\log b -\log a}\,.
\end{equation}
Moreover, while $q$ still satisfies condition~\eqref{eqTheorem1}, the complex numbers
$V_q\big(\omega(q,\ell)\cdot a/b\big)$ and $V_{q,k}\big(\omega(q,\ell)\cdot a/b\big)$ are also transcendental
for all roots of unity $\omega(q,\ell)=\exp\big(\frac{2\pi h}{q^\ell}\big)$, 
where $\ell\ge 1$ and $0\le h\le q^\ell-1$.
\end{theorem}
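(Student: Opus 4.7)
The plan is to construct exceptionally good rational approximations to $V_q(a/b)$ using the lacunary representation
\[ V_q(X) = \sum_{j\ge 1}\frac{X^{q^j}}{1-X^{q^j}}, \]
which follows from the functional identity $V_q(X) = X^q/(1-X^q)+V_q(X^q)$ (equivalent to $\nu_q(qm)=1+\nu_q(m)$), and then to invoke Roth's theorem on rational approximation of algebraic irrationals. Writing $\alpha = a/b$ and
\[ S_K := \sum_{j=1}^K \frac{\alpha^{q^j}}{1-\alpha^{q^j}} = \sum_{j=1}^K \frac{a^{q^j}}{b^{q^j}-a^{q^j}}, \]
the key step for controlling the denominator will be the elementary divisibility $b^m-a^m \mid b^{mn}-a^{mn}$: with $m = q^j$ and $n = q^{K-j}$ it gives $(b^{q^j}-a^{q^j}) \mid (b^{q^K}-a^{q^K})$ for every $j \le K$, so $S_K = N_K/D_K$ with $N_K\in \ZZ$ and $D_K := b^{q^K}-a^{q^K} \le b^{q^K}$. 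A geometric tail bound then yields $|V_q(\alpha)-S_K| \le C\,\alpha^{q^{K+1}}$ for a constant $C$ depending only on $\alpha$.

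Next, I would exploit that condition \eqref{eqTheorem1} is equivalent to $\alpha^q < b^{-2}$, whence there exists $\epsilon > 0$ with $\alpha^q < b^{-(2+\epsilon)}$. Raising to the $q^K$-th power gives $\alpha^{q^{K+1}} < b^{-(2+\epsilon)q^K} \le D_K^{-(2+\epsilon)}$; combined with the tail estimate,
\[ |V_q(\alpha) - N_K/D_K| < D_K^{-(2+\epsilon/2)} \quad \text{for all large } K. \]
Roth's theorem then rules out every algebraic irrational value of $V_q(\alpha)$, while rationality is excluded directly: if $V_q(\alpha) = P/Q$, the approximation quality would force $S_K = P/Q$ for all large $K$, contradicting $S_{K+1}\neq S_K$. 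For $V_{q,k}(\alpha)$, the identity $\nu_{q^k}(n) = \lfloor \nu_q(n)/k\rfloor$ gives $\nu_q(n) - k\nu_{q^k}(n) = \nu_q(n)\bmod k$, whence
\[ V_{q,k}(X) = V_q(X) - k\,V_{q^k}(X) = \sum_{j\ge 1} c_j \frac{X^{q^j}}{1-X^{q^j}}, \qquad c_j = \begin{cases} 1 & \text{if } k \nmid j, \\ 1-k & \text{if } k \mid j. \end{cases} \]
This is a lacunary series on the same exponent set with bounded integer coefficients, so the truncation-and-Roth argument applies verbatim (with $C$ multiplied by $k-1$).

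For the roots-of-unity part, since $\omega := \omega(q,\ell)$ satisfies $\omega^{q^j} = 1$ whenever $j \ge \ell$, splitting the series gives
\[ V_q(\omega\alpha) = \sum_{j=1}^{\ell-1}\frac{(\omega\alpha)^{q^j}}{1-(\omega\alpha)^{q^j}} + V_q(\alpha) - \sum_{j=1}^{\ell-1}\frac{\alpha^{q^j}}{1-\alpha^{q^j}}, \]
so $V_q(\omega\alpha) - V_q(\alpha)$ is algebraic and transcendence transfers from $\alpha$ to $\omega\alpha$; the same reduction handles $V_{q,k}$. The main obstacle I expect is the sharp denominator bound: the naive estimate $\prod_{j\le K}(b^{q^j}-a^{q^j}) \le b^{(q^{K+1}-q)/(q-1)}$ only yields the weaker threshold $q-1 > 2\log b/(\log b-\log a)$, and it is precisely the divisibility-based LCM bound $D_K = b^{q^K}-a^{q^K}$ that saves a factor of $q$ in the exponent and delivers the exact condition \eqref{eqTheorem1}.
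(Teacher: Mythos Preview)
Your proof is correct and follows essentially the same route as the paper: the lacunary representation $V_q(X)=\sum_{j\ge 1}X^{q^j}/(1-X^{q^j})$, the key divisibility $(b^{q^j}-a^{q^j})\mid(b^{q^K}-a^{q^K})$ giving the common denominator $D_K=b^{q^K}-a^{q^K}$, a geometric tail bound, and Roth's theorem, with the roots-of-unity case reduced to $a/b$ via the rationality of $V_q(\omega X)-V_q(X)$. Your derivation of the decomposition $V_{q,k}(X)=\sum_j c_j f_j(X)$ from the identity $V_{q,k}=V_q-kV_{q^k}$ is a pleasant shortcut compared to the paper's direct coefficient computation, and your explicit exclusion of the rational case via $S_{K+1}\neq S_K$ replaces the paper's ``positive segments'' device, but otherwise the arguments coincide.
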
 

\begin{theorem}\label{TheoremExtra}
If $\AA$ is a field of characteristic $0$ 
consisting of algebraic numbers only, then the power series
$V_q(X)$ and $V_{q,k}(X)$ viewed as elements of $\AA[[X]]$ 
are not algebraic over~$\AA(X)$. 
\end{theorem}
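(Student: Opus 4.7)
\textbf{Proof proposal for Theorem~\ref{TheoremExtra}.}

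The plan is to derive Theorem~\ref{TheoremExtra} as a corollary of Theorem~\ref{Theorem1}, by invoking the classical fact that, in characteristic zero, every algebraic formal power series is holonomic. To begin, observe that the coefficients $\nu_q(n)$ and $\nu_q(n) \pmod{k}$ are non-negative integers, so both $V_q(X)$ and $V_{q,k}(X)$ lie in $\ZZ[[X]] \subseteq \QQ[[X]] \subseteq \AA[[X]]$, and the statement is meaningful.

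Next, I would perform a descent step to replace $\AA(X)$ by $\QQ(X)$. Suppose, toward a contradiction, that $V_q(X)$ is algebraic over $\AA(X)$. Because $\AA$ consists only of algebraic numbers, $\AA$ embeds into $\overline{\QQ}$, so the extension $\AA(X)/\QQ(X)$ is itself algebraic. A vanishing polynomial for $V_q(X)$ over $\AA(X)$ involves only finitely many algebraic coefficients $\alpha_1,\dots,\alpha_m \in \AA$, and $\QQ(X)(\alpha_1,\dots,\alpha_m)$ is a finite extension of $\QQ(X)$. By transitivity of algebraicity, $V_q(X)$ is therefore algebraic over $\QQ(X)$ as well.

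Finally, I would invoke the classical theorem, essentially due to Comtet and standard in the theory of $D$-finite series, which asserts that any formal power series over a field of characteristic zero that is algebraic over the rational function field satisfies a non-trivial linear differential equation with polynomial coefficients. Applied to our situation, this would show that $V_q(X)$ is holonomic over $\QQ$, directly contradicting Theorem~\ref{Theorem1}. The same argument applies verbatim to $V_{q,k}(X)$. Since the substantive work has already been carried out in Theorem~\ref{Theorem1}, I do not anticipate a genuine obstacle; the only point requiring a moment of care is the descent from $\AA(X)$ down to $\QQ(X)$, which is forced by the inclusion $\AA \subseteq \overline{\QQ}$, while the algebraic-implies-holonomic implication is standard.
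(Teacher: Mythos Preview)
Your proof is correct, but it follows a different path from the paper's. The paper does not invoke the ``algebraic $\Rightarrow$ holonomic'' implication at all; instead it proves a slightly more general statement (Theorem~\ref{TheoremExtra1}) by appealing to the transcendence result Theorem~\ref{Theorem4}: if $W(X)$ satisfied a polynomial relation $\sum_j g_j(X)W(X)^j=0$ with $g_j\in\AA(X)$, one chooses a rational $a/b\in(0,1)$ that avoids the finitely many poles of the $g_j$ and satisfies the hypothesis of Theorem~\ref{Theorem4}, evaluates at $X=a/b$, and concludes that $W(a/b)$ is algebraic --- contradicting its transcendence obtained via Roth's theorem. Your argument instead routes everything through Theorem~\ref{Theorem1} together with the classical Comtet/Stanley fact that an algebraic power series over a characteristic-zero field is $D$-finite. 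Your route is arguably more economical for the bare non-algebraicity claim, since it bypasses Roth's theorem entirely; the paper's route, on the other hand, recycles the transcendence machinery already built for Theorem~\ref{Theorem2} and delivers the more general Theorem~\ref{TheoremExtra1} at the same time. As a minor remark, your descent from $\AA(X)$ to $\QQ(X)$ is correct but not strictly needed: one can simply embed $\AA\subseteq\CC$, deduce algebraicity over $\CC(X)$, and then apply the algebraic $\Rightarrow$ holonomic implication over $\CC$, contradicting Theorem~\ref{Theorem1} directly.
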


We point out that Theorems~\ref{Theorem2}
and~\ref{TheoremExtra} are part of certain
more general Theorems~\ref{Theorem4} and~\ref{TheoremExtra1}
that we prove in Sections~\ref{SubsectionLargerClass}
and~\ref{SubsectionProofTheorem3}, respectively.

\section{Non-holonomic series} 

\subsection{\texorpdfstring{The function $V_{q,k}(X)$}{The function V(X)}} 

A formal power series $f = f(X) \in \CC[[X]]$ is called \emph{holonomic} if there are polynomials 
$a_0(X),a_1(X), \dots, a_r(X) $ $ \in \CC [X]$, with the leading polynomial $a_r (X) \neq 0$,  such that the following identity holds
\begin{align*}
   a_r(X) \sdfrac{\dd^r}{\dd X^r} f(X) + a_{r-1}(X) \sdfrac{\dd^{r-1}}{\dd X^{r-1}} f(X) + \dots + a_1 (X) \sdfrac{\dd}{\dd X}f(X) + a_0(X) f(X) = 0. 
\end{align*}
Holonomic series are the generating functions of the holonomic sequences. 
Thus, a sequence $\{c(n)\}_{n\ge 0}$ of complex numbers is called \emph{holonomic} if there are $r\ge 0$ polynomials 
$a_r(X), a_{r-1}(X),\dots,a_0(X) \in \CC [X]$, with the leading polynomial $a_r(X)$ not identically zero,  such that for all $n \geq 0$, 
\begin{align*}
  a_r(n) c(n+r) + a_{r-1}(n) c(n+r-1) + \cdots + a_0(n) c(n) =0.  
\end{align*}
In case that all polynomials $a_r(X),\dots, a_0(X)$
are actually constant complex numbers, which means that they all have degree zero, 
the sequence $\{c(n)\}_{n\ge 0}$ is called $\mathrm{C}$-\emph{recursive} or \emph{constant-recursive}. 
A sequence is \Crec if and only if its generating function is a rational function $A(X) / B(X)$ with $\deg(A) < \deg(B)$ (see Kauers and Paule~\cite{KP2011}, and Stanley~\cite{Sta2012}). 
Examples of representations of \Crec sequences can be found in articles of Prunescu \cite{PruOtherTwo} and with Sauras-Altuzarra~\cite{PSAxiv}.

When viewed as a power series in $\CC[[X]]$, 
$V_q(X)$ is absolutely convergent for~\mbox{$|X|<1$}.
The fact that the radius of convergence is~$1$ can be seen by applying the Cauchy-Hadamard theorem, 
or the root test, by observing that the $q$-adic valuation of $n$ is bounded by $\log n/\log q$ and the equality
holds for all powers of $q$. Therefore
\begin{equation*}
	\limsup_{n\to\infty}\big(\log n\big)^{1/n}
	= \limsup_{n\to\infty} \exp\Big(\sdfrac{\log\log n}{n}\Big)
	=1,
\end{equation*}
and, likewise, $\lim\limits_{n\to\infty}q^{1/n}=1$.

In addition, the series $V_{q,k}(X)$ is absolutely convergent for $|X| < 1$ and the radius of convergence is $1$. Indeed, 
\begin{equation*}
    \limsup_{n \to \infty} 
        \big(\nu_q(n) \pmod k\big)^{1/n}
    =\limsup_{n \to \infty}\,
        (k-1)^{1/n}
    = 1,
\end{equation*}
for all integers $k \geq 2$. 

Next we prove the non-holonomy of both the series $V_q(X)$ and the sequence $\{\nu_q(n)\}_{n\ge 1}$ for all integers $q\ge 2$.
The result is based on a theorem of P\'olya and Carlson
 (see P\'olya~\cite{Pol1916}, Carlson~\cite{Car1921}, or Bieberbach~\cite{Bieber1968})
according to which 
a series with integer coefficients, which is analytic at the origin and converges in the open unit disc, 
is either a rational function or has the open unit circle as  natural boundary. 
For other implications of the theorem of P\'olya and Carlson to non-holonomic functions see Flajolet, Gerhold and Salvy~\cite{Ger2004, FGS2005}.
\begin{proposition}\label{Propositionpnonholonomic}
  If $q \geq 2$, then the series $V_q(X)$ is not holonomic. 
Also, the sequence $\{\nu_q(n)\}_{n\ge 1}$ is not holonomic. 
\end{proposition}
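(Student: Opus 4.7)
The plan is to combine the Pólya--Carlson theorem quoted immediately above the statement with the well-known structural fact that any holonomic power series has only finitely many singularities in $\CC$, these lying necessarily among the roots of the leading polynomial $a_r(X)$ of a defining ODE. Together these two inputs force a holonomic integer-coefficient series with radius of convergence $1$ to be rational; I then refute rationality of $V_q(X)$ directly.

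More precisely, I would assume for contradiction that $V_q(X)$ is holonomic. Since the coefficients $\nu_q(n)$ are integers and the radius of convergence equals $1$ (as just established), Pólya--Carlson leaves only two possibilities: either $V_q(X)$ is rational, or the unit circle is a natural boundary of $V_q(X)$. The latter is incompatible with the finiteness of the singularity set of a holonomic function, so $V_q(X)$ would have to be rational, say $V_q(X)=A(X)/B(X)$ with $\deg A<\deg B$. In that case $\{\nu_q(n)\}_{n\ge 1}$ is $C$-recursive, and partial fraction decomposition combined with the geometric series give
\[
   \nu_q(n) \;=\; \sum_{j} Q_j(n)\,\beta_j^{\,n} \qquad (n\text{ large}),
\]
where the $\beta_j$ are the reciprocals of the distinct poles of $V_q$ and $Q_j\in\CC[X]$. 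Because the radius of convergence equals $1$ we have $|\beta_j|\le 1$: the summands with $|\beta_j|<1$ decay exponentially, while those with $|\beta_j|=1$ have modulus $|Q_j(n)|$.

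The estimate $\nu_q(n)\le\log_q n$ rules out polynomial growth, so I would conclude that every $Q_j$ attached to a unit-modulus $\beta_j$ must be a constant. Granted this, $\{\nu_q(n)\}$ is bounded, in direct contradiction with $\nu_q(q^k)=k\to\infty$; the non-holonomicity of the sequence $\{\nu_q(n)\}$ itself then follows because a sequence is holonomic if and only if its generating function is. The delicate step, and the one I expect to require the most care, is precisely the passage from ``$\nu_q(n)=O(\log n)$'' to ``every $Q_j$ with $|\beta_j|=1$ is constant'': individual terms $Q_j(n)\beta_j^n$ with $\deg Q_j\ge 1$ grow polynomially, but a priori the top-order contributions could conspire to cancel. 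This cancellation is precluded by the linear independence of the distinct exponentials $n\mapsto\beta_j^n$ over $\CC[n]$ -- the same fact underlying the uniqueness of the partial-fraction expansion -- which guarantees that the leading $n^D$ component of the sum is itself a nonzero almost-periodic function of $n$, hence bounded away from $0$ on a set of positive density, contradicting the $O(\log n)$ bound unless $D=0$.
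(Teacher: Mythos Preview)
Your proof is correct and shares the paper's global structure---apply P\'olya--Carlson, observe that a natural boundary is incompatible with holonomicity because holonomic functions have only finitely many singularities (the zeros of $a_r$), and hence reduce to refuting rationality of $V_q(X)$. The divergence is entirely in how the rational case is dispatched.

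The paper argues combinatorially: if $\nu_q(n)$ is eventually $C$-recursive of order $d$, pick $k$ with $q^k>d$ and two integers $m_1,m_2$ with $\nu_q(m_1)=k$, $\nu_q(m_2)=k+1$; since any two integers of valuation $\ge k$ are at distance $\ge q^k>d$, the $d$ predecessors of $m_1$ and of $m_2$ have identical valuations, forcing $\nu_q(m_1)=\nu_q(m_2)$, a contradiction. Your route instead passes through the exponential-polynomial form $\nu_q(n)=\sum_j Q_j(n)\beta_j^{\,n}$, uses $|\beta_j|\le 1$ from the radius of convergence, and then the $O(\log n)$ bound on $\nu_q(n)$ together with the nonvanishing mean square of a nonzero exponential sum on the unit circle to force all $Q_j$ with $|\beta_j|=1$ to be constants, whence $\nu_q(n)$ is bounded---contradicting $\nu_q(q^k)=k$. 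One cosmetic point: writing $V_q=A/B$ with $\deg A<\deg B$ is not automatic, but you do not actually use it; the exponential-polynomial formula for large $n$ is all you need and is valid for any rational generating function.

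The paper's argument is more elementary and entirely self-contained (no partial fractions, no almost-periodicity), and it exploits a feature specific to $\nu_q$, namely the spacing of high-valuation integers. Your argument is less ad hoc: it shows more generally that \emph{any} integer sequence which is unbounded yet grows slower than every positive power of $n$ has a non-rational generating function, a template that would apply verbatim to many other sequences.
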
 

\begin{proof} 
{\bf Case 1}: $V_q(X)$ is rational. A rational function $A(X) / B(X)$ can be always written as $P(X) + C(X) / B(X)$, where $A(X), B(X), C(X), P(X) \in \CC[X]$ are polynomials, and $\deg(C(X)) < \deg(B(X))$. 
It follows that $\nu_q(n) = x(n) + y(n)$ for all integers $n \ge 0$, 
where $x(n)$ is a sequence with finite support 
and $y(n)$ is a \Crec sequence. 
Therefore there is some integer $N\ge 0$ such that for all 
$n \geq N$, $\nu_q(n) = y(n)$. Let $d \geq 1$ be the order of~$y$ as a C-recursive sequence. It follows that for all $n \geq N$, the sequence $\nu_q(n)$ satisfies the recurrence with constant coefficients
\begin{align*}
a_d \nu_q(n+d) + a_{d-1} \nu_q(n+d-1) + \cdots + a_0 \nu_q(n) =0,  
\end{align*}
 which is satisfied by the sequence $y$. Here $a_d, \dots, a_0 \in \CC$ are some constants.
We observe that for every $k \geq 1$ the minimal distance between two different natural numbers $n_1 \neq  n_2$, with $\nu_q(n_1) \geq k$ and $  \nu_q(n_2) \geq k$, is $q^k$. Take $k$ such that $q^k > d$. Take $m_1 > N$  and $m_2 > N$ such that $\nu_q(m_1) = k$ and $\nu_q(m_2) = k + 1$. We see that $\nu_q(m_1 - i) = \nu_q(m_2 - i)$
for $i = 1, \dots, d$, and this would imply $\nu_q(m_1) = \nu_q(m_2)$, which is a contradiction.

{\bf Case 2}: $V_q(X)$ is not rational.
Since the function $V_q(X)$ has radius of convergence $R = 1$ at $0$, but is not a rational function, it follows from the theorem of P\'olya and Carlson that $V_q(X)$ has infinitely many singularities on the circle $C(0,1)$. A dense set of singularities will be described in the next section. But a function which is analytic at $0$ and holonomic can be continued analytically along any path that avoids the finite set $\cS$ that consists of the roots of the polynomial $a_r(X)$ 
(see Henrici~\cite[{Theorem 9.1}]{Henrici1974}) for the existence of analytic solutions for differential equations. But this does not happen with the function $V_q(X)$, because it cannot be analytically continued outside the disc $D(0,1)$. 
\end{proof} 

\begin{proposition}\label{Propositionpknonholonomic}
If $q, k \geq 2$ are integers, then the series $V_{q,k}(X)$ is not holonomic. Also, the sequence 
$\{\nu_p(n) \bmod k\}_{n\ge 1}$     
is not holonomic. 
\end{proposition}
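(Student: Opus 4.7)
I would mirror the proof of Proposition~\ref{Propositionpnonholonomic} almost verbatim, splitting into the same two cases and noting only where the modulo-$k$ reduction needs attention.

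\textbf{Case 1: $V_{q,k}(X)$ is rational.} As in Proposition~\ref{Propositionpnonholonomic}, writing the rational function as $P(X) + C(X)/B(X)$ with $\deg C < \deg B$ would force the sequence $c_n := \nu_q(n) \bmod k$ to coincide, beyond some threshold $N$, with a \Crec sequence of some order $d$ satisfying $a_d c_{n+d} + a_{d-1} c_{n+d-1} + \cdots + a_0 c_n = 0$ with $a_d \neq 0$. I would then choose an integer $K$ with $q^K > \max(d, N)$ and set $m_1 := q^K$ and $m_2 := q^{K+1}$. For every $i = 1, \dots, d$ one has $i < q^K$ and $\nu_q(i) < K$, so the factorization $m_j - i = q^{\nu_q(i)}\bigl(q^{K-\nu_q(i)}u_j - v_i\bigr)$ (with $q \nmid v_i$) shows that $\nu_q(m_j - i) = \nu_q(i)$ for both $j \in \{1,2\}$. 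Hence $c_{m_1 - i} = c_{m_2 - i}$ on the $d$ preceding indices, and the recurrence (solvable for $c_{n+d}$ since $a_d \neq 0$) forces $c_{m_1} = c_{m_2}$. But $c_{m_1} \equiv K \pmod{k}$ and $c_{m_2} \equiv K+1 \pmod{k}$, which disagree whenever $k \geq 2$; contradiction.

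\textbf{Case 2: $V_{q,k}(X)$ is not rational.} Since the coefficients $\nu_q(n) \bmod k \in \{0, 1, \dots, k-1\}$ are nonnegative integers and the series converges in the open unit disc (as shown in the discussion preceding Proposition~\ref{Propositionpnonholonomic}), the theorem of P\'olya and Carlson applies and the unit circle must be the natural boundary of $V_{q,k}(X)$. A holonomic series analytic at $0$ admits analytic continuation along every path avoiding the finite zero set of its leading polynomial $a_r(X)$ (Henrici~\cite[Theorem 9.1]{Henrici1974}), which is incompatible with an entire circle of singularities. Hence $V_{q,k}(X)$ is not holonomic. The non-holonomy of the sequence $\{\nu_q(n) \bmod k\}_{n \ge 1}$ then follows from the standard equivalence between holonomic (D-finite) series and holonomic (P-recursive) sequences.

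The only nontrivial adjustment compared to the previous proof is in Case~1: one must confirm that the modulo-$k$ reduction still detects the unit gap between the valuations $K$ and $K+1$, which it does precisely because $k \geq 2$. I do not anticipate any serious obstacle, since both the neighbor-matching identity $\nu_q(m_j - i) = \nu_q(i)$ and the P\'olya-Carlson / Henrici machinery transfer intact, thanks to the fact that the coefficients remain integers after reduction.
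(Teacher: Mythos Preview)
Your proof is correct and follows essentially the same two-case strategy as the paper. The paper's own proof is terser in Case~1 (it merely asserts that infinitely many suitable pairs $m_1,m_2$ exist), whereas you make the choice $m_1=q^K$, $m_2=q^{K+1}$ explicit and verify the key identity $\nu_q(m_j-i)=\nu_q(i)$ directly; one tiny adjustment is to take $q^K>N+d$ rather than $q^K>\max(d,N)$ so that the recurrence index $m_1-d$ lands past the threshold $N$.
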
 

\begin{proof}
The proof follows the same steps as in the proof of Proposition~\ref{Propositionpnonholonomic}. Again, by 
the theorem of P\'olya-Carlson, the series is either a rational function or has the unit circle as a natural boundary. 

{\bf Case 1}: If the series was a rational function, then the sequence $\nu_q(n) \pmod k$ would be ultimately a \Crec sequence of some degree $d \geq 1$. There are infinitely many integers $m_1$ and $m_2$ such that 
$\nu_q(m_1 - j) \pmod k = \nu_q(m_2 - j) \pmod k$
for $j = 1, \dots, d$, 
since $\nu_q(m_1 - j) = \nu_q(m_2 - j)$,  
but $\nu_q(m_1) \pmod k \not=  \nu_q(m_2) \pmod k$. 

{\bf Case 2}: 
If the series is not a rational function, the same argument from Proposition~\ref{Propositionpnonholonomic} applies.
\end{proof}

\section{Alternative expressions, identities and singular points}

We write the formal power series $V_q(X)$ with terms separated in such a way that all coefficients are equal to $1$ 
and group them according to the similar types of powers they have, as follows:
\begin{equation*}
    V_q(X) = \big(X^q+X^{2q}+X^{3q}+\cdots\big)
         +\big(X^{q^2}+X^{2q^2}+X^{3q^2}+\cdots\big)
         +\big(X^{q^3}+X^{2q^3}+X^{3q^3}+\cdots\big)+\cdots,
\end{equation*}
so that
\begin{equation}\label{eqVfractions}
  \begin{split}
    V_q(X) &= \sum_{m=1}^\infty X^{mq} + \sum_{m=1}^\infty X^{mq^2} + \sum_{m=1}^\infty X^{mq^3}+\cdots\\
      & = \sdfrac{X^q}{1-X^q} + \sdfrac{X^{q^2}}{1-X^{q^2}} + \sdfrac{X^{q^3}}{1-X^{q^3}} +\cdots.
  \end{split}
\end{equation}

The domain where $V_q(X)$ is analytic cannot be extended outside $|X|<1$
as can 
still be seen from the expression of $V_q(X)$ in~\eqref{eqVfractions},
where the fractions~$\frac{X^{q^k}}{1-X^{q^k}}$ 
tend very fast to zero as~$k$ increases if $|X|<1$, while all of them explode at any root of unity of order a power of $q$.
Geometrically, the poles of the fractions~$\frac{X^{q^k}}{1-X^{q^k}}$ are distributed on the unit circle $\cC\big((0,0),1\big)$ 
as the vertices of regular polygons, and their union becomes dense on $\cC\big((0,0),1\big)$ as $k$ tends to infinity. 

\smallskip
Now let $h,\ell$ be fixed positive integers and let $\omega$ be the root of unity 
\begin{equation*}
   \omega:=\exp\Big(\sdfrac{2\pi i h}{q^\ell}\Big).
\end{equation*}
Then
\begin{equation}\label{eqVfromega}
  \begin{split}
    V_q(\omega X) = \sdfrac{\omega^qX^q}{1-\omega^qX^q} 
      + \sdfrac{\omega^{q^2}X^{q^2}}{1-\omega^{q^2}X^{q^2}}  +\cdots
      + \sdfrac{\omega^{q^\ell}X^{q^\ell}}{1-\omega^{q^\ell}X^{q^\ell}} 
      + \sdfrac{\omega^{q^{\ell+1}}X^{q^{\ell+1}}}{1-\omega^{q^{\ell+1}}X^{q^{\ell+1}}} 
      +\cdots.
  \end{split}
\end{equation}
Here, in the last two fractions shown and in all the others that follow, 
the factors with $\omega$ at different powers become equal to $1$, 
so that the fractions in the expansion are exactly identical to those of $V_q(X)$ in~\eqref{eqVfractions}. 
As a consequence, the difference $V_q(X)-V_q(\omega X)$ is a rational function.
We summarize the result in the following proposition.
\begin{proposition}\label{Proposition1}
Let $q \geq 2$ and  let $h$ and $\ell$ be positive integers.
Suppose $h$ is relatively prime to $q$ and let
$\omega=\exp\Big(\frac{2\pi i h}{q^\ell}\Big)$.
Then
\begin{equation}\label{eqVDifferenceRationsl}
  \begin{split}
    V_q(\omega X) - V_q(X) &= \sdfrac{\omega^qX^q}{1-\omega^qX^q} 
      + \sdfrac{\omega^{q^2}X^{q^2}}{1-\omega^{q^2}X^{q^2}}  +\cdots
      + \sdfrac{\omega^{q^{\ell-1}}X^{q^{\ell-1}}}{1-\omega^{q^{\ell-1}}X^{q^{\ell-1}}}\\
      &\quad - \sdfrac{X^q}{1-X^q} - \sdfrac{X^{q^2}}{1-X^{q^2}} -\cdots 
        -\sdfrac{X^{q^{\ell-1}}}{1-X^{q^{\ell-1}}}\,.
  \end{split}
\end{equation}
\end{proposition}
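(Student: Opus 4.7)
The plan is to start from the decomposition \eqref{eqVfractions}, namely
\[
V_q(X) = \sum_{k=1}^{\infty} \frac{X^{q^k}}{1-X^{q^k}},
\]
and simply substitute $\omega X$ in place of $X$, obtaining the expansion \eqref{eqVfromega} shown in the excerpt. Both series converge absolutely for $|X|<1$ since $|\omega X|=|X|$, so this termwise substitution is legitimate and subtracting the two expansions termwise poses no issue.

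The core observation is that $\omega^{q^{\ell}}=\exp(2\pi i h)=1$, and therefore for every index $k\ge \ell$ one has
\[
\omega^{q^{k}} = \bigl(\omega^{q^{\ell}}\bigr)^{q^{k-\ell}} = 1,
\]
so the $k$-th summand of $V_q(\omega X)$ coincides identically with the $k$-th summand of $V_q(X)$. Consequently, in the difference $V_q(\omega X)-V_q(X)$, all terms with $k\ge\ell$ cancel, and what remains is exactly the finite sum of the $\ell-1$ surviving fractions from $V_q(\omega X)$ minus the corresponding $\ell-1$ fractions from $V_q(X)$. Rearranging these surviving terms is precisely the right-hand side of~\eqref{eqVDifferenceRationsl}.

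The hypothesis $\gcd(h,q)=1$ is not needed for the identity itself; the cancellation for $k\ge\ell$ only uses $\omega^{q^{\ell}}=1$. Coprimality serves a slightly different role, namely it ensures that $\omega$ is a \emph{primitive} $q^{\ell}$-th root of unity, so that none of the remaining factors $\omega^{q^{k}}$ with $1\le k\le \ell-1$ equal~$1$, which is what makes the finitely many surviving summands genuinely contribute and keeps the formula non-degenerate. There is no real obstacle here: the whole argument is a bookkeeping calculation once \eqref{eqVfractions} is in hand, and the only point requiring a line of care is justifying the termwise subtraction, which follows from absolute convergence of both series inside the open unit disc.
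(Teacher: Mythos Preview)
Your proposal is correct and follows essentially the same approach as the paper: both start from the decomposition~\eqref{eqVfractions}, substitute $\omega X$, observe that $\omega^{q^k}=1$ for all $k\ge\ell$, and conclude that the tails cancel termwise leaving only the finitely many fractions with $1\le k\le\ell-1$. Your write-up is, if anything, slightly more explicit than the paper's in justifying the termwise subtraction via absolute convergence and in commenting on the (inessential) role of the coprimality hypothesis.
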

\begin{remark}\label{Remark1}
    Two examples:
    \begin{enumerate}
     \item 
        If $\ell=1$, then $\omega = \exp\big(\frac{2\pi i}{q}\big)$, so $\omega^q=1$, 
        which implies $V_q(\omega X) = V_q(X)$,
    equality that is included by convention in~\eqref{eqVDifferenceRationsl} as well.
      \item 
       If $p=2$, we get a nontrivial result with roots of $-1$ instead of roots of unity. Indeed, 
       with $\omega = i$, we have
   \begin{equation*} 
     V_2(iX)- V_2(X) = \sdfrac{-X^2}{1+X^2} - \sdfrac{X^2}{1-X^2}
                    = \sdfrac{-2X^2}{1-X^4}\,.  
\end{equation*}    
  Taking $\omega$ as a root of unity of order greater than $4$, 
a similar outcome is also achieved with the difference $V_q(\omega X) - V_q(X)$ 
being a rational function over $\QQ(\omega)$.
    \end{enumerate}
\end{remark} 

In order to treat the series $V_{q,k}(X)$, 
we need to introduce 
the \textit{bitty series} $f_r(X)=f_{q,r}(X)$
defined by
\begin{align}\label{eqDeffsX}
  f_r(X) := \sum _{m=1}^\infty X^{mq^r} 
  = \frac{X^{q^r}}{1 - X^{q^r}}
  \ \text{ for any $r \geq 1$. }
\end{align}

\begin{proposition}\label{Proposition2}
For every integers $q, k \geq 2$, we let 
$V_{q,k}(X)$ be defined by~\eqref{eqDefVpk} and for every integers
$r\ge 1$ we let $f_r(X)$ be defined by~\eqref{eqDeffsX}.
Then we have:
\begin{equation}\label{eqVpk}
    \begin{split}
     V_{q,k}(X) &= f_1(X) + \dots + f_{k-1}(X) + (1-k)f_k(X) +\\
     &\quad + f_{k+1}(X) + \dots + f_{2k-1}(X) + (1-k) f_{2k} (X) +  \\
     &\quad + f_{2k+1}(X) + \dots + f_{3k-1}(X) + (1-k) f_{3k} (X) + \dots \\
     &= \sum_{j = 1}^\infty a_j f_j(X),
     \end{split}
\end{equation}
    where
\begin{align}\label{eqDefaj}
  a_j = \begin{cases}
    1, & \textrm{ if }\ k \nmid j; \\[2pt] 
    1-k, & \textrm{ if  }\ k \mid j.
    \end{cases}
\end{align}
Also, if $h$ and $\ell$ are positive integers, $h$ is relatively prime to $q$ and $\omega=\exp\Big(\frac{2\pi i h}{q^\ell}\Big)$, then $V_{q,k}(\omega X) - V_{q,k}(X)$ is a rational function. 
\end{proposition}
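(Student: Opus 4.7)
The plan is to prove the two assertions in turn: first the series identity in~\eqref{eqVpk}, then the rationality of $V_{q,k}(\omega X)-V_{q,k}(X)$.

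For the identity, I would compare coefficients of $X^n$ on both sides. On the right, since $f_j(X)=\sum_{m\ge 1}X^{mq^j}$, the coefficient of $X^n$ in $f_j(X)$ is $1$ if $q^j\mid n$ and $0$ otherwise. Hence, writing $s:=\nu_q(n)$, the coefficient of $X^n$ in $\sum_{j\ge 1}a_j f_j(X)$ equals $\sum_{j=1}^{s}a_j$. Writing $s=mk+r$ with $0\le r<k$, exactly $m$ of the indices $j\in\{1,\dots,s\}$ are divisible by $k$, so
\begin{equation*}
    \sum_{j=1}^{s}a_j \;=\; (s-m)\cdot 1 + m\cdot(1-k) \;=\; s-mk \;=\; r,
\end{equation*}
which is precisely $\nu_q(n)\pmod k$, the coefficient of $X^n$ in $V_{q,k}(X)$. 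This proves~\eqref{eqVpk}. The initial expanded display in the statement is then just the regrouping of $\sum_j a_j f_j(X)$ in blocks of $k$ consecutive indices.

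For the second part, I would mimic the argument that yielded Proposition~\ref{Proposition1}. Substituting $\omega X$ for $X$ in the identity just proved gives
\begin{equation*}
    V_{q,k}(\omega X)-V_{q,k}(X) \;=\; \sum_{j=1}^{\infty}a_j\bigl(f_j(\omega X)-f_j(X)\bigr)
    \;=\;\sum_{j=1}^{\infty}a_j\left(\frac{\omega^{q^j}X^{q^j}}{1-\omega^{q^j}X^{q^j}}-\frac{X^{q^j}}{1-X^{q^j}}\right).
\end{equation*}
The key observation is that $\omega^{q^\ell}=\exp(2\pi i h)=1$, so $\omega^{q^j}=1$ for every $j\ge \ell$, which kills all terms with index $j\ge\ell$. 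What remains is the \emph{finite} sum
\begin{equation*}
    V_{q,k}(\omega X)-V_{q,k}(X) \;=\; \sum_{j=1}^{\ell-1}a_j\left(\frac{\omega^{q^j}X^{q^j}}{1-\omega^{q^j}X^{q^j}}-\frac{X^{q^j}}{1-X^{q^j}}\right),
\end{equation*}
which is manifestly a rational function of $X$ over $\QQ(\omega)$, as claimed.

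There is no real obstacle here; the main thing to watch is making sure the convergent rearrangement used to pass from $V_{q,k}(X)=\sum_{n\ge 1}(\nu_q(n)\bmod k)X^n$ to $\sum_j a_j f_j(X)$ is legitimate inside $|X|<1$. Since all three expansions converge absolutely there, the rearrangement is justified, and the equality extends to a formal identity in $\CC[[X]]$. The hypothesis $\gcd(h,q)=1$ is not actually needed for the cancellation $\omega^{q^j}=1$ when $j\ge \ell$; it is only used, as in Remark~\ref{Remark1}, to ensure that the resulting rational function is nontrivial when $\ell\ge 2$.
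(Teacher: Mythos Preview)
Your proof is correct and follows essentially the same approach as the paper's: both compare coefficients by observing that $X^n$ appears in $f_j(X)$ precisely when $j\le \nu_q(n)$, then evaluate the partial sum $\sum_{j=1}^{s}a_j$ by grouping into blocks of length $k$; and both deduce the rationality of $V_{q,k}(\omega X)-V_{q,k}(X)$ from the vanishing of $f_j(\omega X)-f_j(X)$ for $j\ge\ell$, exactly as in Proposition~\ref{Proposition1}. Your write-up is in fact slightly more explicit on the second part than the paper's, which merely refers back to the earlier argument.
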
 

\begin{proof}
For every complex number $x$, with $|x|<1$, since the series 
$\sum_{n\ge 1} \big(\nu_q(n) \pmod k\big) x^n$ is absolutely convergent, we can rearrange and regroup  its terms as follows:
\begin{align*}
     V_{q,k}(x) &= f_1(x) + \dots + f_{k-1}(x) + (1-k)f_k(x) +\\
     &\quad + f_{k+1}(x) + \dots + f_{2k-1}(x) + (1-k) f_{2k} (x) +  \\
     &\quad + f_{2k+1}(x) + \dots + f_{3k-1}(x) + (1-k) f_{3k} (x) + \cdots \\
     &= \sum_{j = 1}^\infty a_j f_j(x),
\end{align*}
where the coefficients $a_j$ are defined as in~\eqref{eqDefaj}.

Let us examine the expansion in the middle of 
the right-hand side of formula~\eqref{eqVpk} and 
compute the coefficient of $X^{mq^r}$, 
where $q \nmid m$. 
Let $r = dk + v$, where $0 \leq v = r \pmod k < k$. We observe that $X^{mq^r}$ arises in all $f_j(X)$ with $0 < j \leq r$. The sum of its coefficients equals:
\begin{align*}
\overbrace{
  \big( \underbrace{1 + \cdots + 1}_{k-1 \text{ $1$'s}}  + (1-k)\big) + 
  \dots + \big( \underbrace{1 + \cdots + 1}_{k-1 \text{ $1$'s}} 
  + (1-k)\big)
  }^{d \text{ terms}}
  +  \underbrace{1 + \cdots + 1}_{v \text{ $1$'s}} = v.
\end{align*}
Thus, the coefficient of  $X^{mq^r}$ on the right-hand side
of formula~\eqref{eqVpk} is 
\begin{align*}
  v = r \pmod k = \nu_q(m q^r) \pmod k.  
\end{align*}

The fact that $V_{q,k}(\omega X) - V_{q,k}(X)$ 
is a rational function follows from the above 
in a similar way as the rationality of $V_{q}(\omega X) - V_{q}(X)$ followed in the proof of Proposition~\ref{Proposition1}.
\end{proof} 

Let $k\ge 2$ be a fixed integer. 
For every integer $j\ge 1$, we let $A_{k,j} (X)$ denote the sum
\begin{align*}
    A_{k,j} (X) := f_{kj+1}(X) + \cdots+ f_{kj+k-1}(X) 
             + ( 1-k ) f_{k(j+1)}(X),
\end{align*}
where $f_{r}(X)$ is defined by~\eqref{eqDeffsX} for $r\ge 1$.

We observe that  $f_n(x) > f_{n+t}(x)$ for all real numbers $x\in (0,1)$ and all integers~$t\ge 1$. 

We also observe that for any integer $j\ge 0$ and for any $x\in (0,1)$ we have:
\begin{enumerate}
    \item [(i)]
 $A_{k,j}(x) > 0$; and
    \item  [(ii)]
  $A_{k,j}(x) > A_{k, j+1}(x) $.
\end{enumerate}

We say that a sequence 
\begin{align*}
  W(X) := \sum_{i \geq 1} a_i f_i(x)
\end{align*}
\textit{has positive segments} if there exists a strictly increasing sequence $\{n_j\}_{j \geq 1}$, such that
\begin{align*}
    B_j(x) &:= a_{n_j} f_{n_j}(x) + a_{n_j + 1} f_{n_j + 1}(x) + \dots + a_{n_{j+1} -1} f_{n_{j+1} -1}(x) > 0,  \\
    \intertext{and}
    B_{j}(x) &< B_{j+1}(x)
\end{align*}
for all $j \geq 1$ and for all $x \in (0,1)$.

\begin{proposition}\label{PropPositiveQueues}
    Let $q \geq 2$ and $k \geq 2$ be integers. Then the series $V_q(X)$ and $V_{q,k}(X)$ have positive segments.
\end{proposition}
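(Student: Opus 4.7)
The plan is to exhibit an explicit segmentation $\{n_j\}$ in each case and verify the two defining conditions directly.

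For $V_q(X)$ the trivial segmentation $n_j = j$ already suffices: each block collapses to a single bitty $B_j(x) = f_j(x) = x^{q^j}/(1-x^{q^j})$, which is positive on $(0,1)$, and the ordering condition reduces to $f_j(x) > f_{j+1}(x)$. The latter follows either from the strict monotonicity of $u \mapsto u/(1-u)$ on $(0,1)$ applied to the inequality $x^{q^j} > x^{q^{j+1}}$, or directly from the identity
\begin{equation*}
f_j(x) - f_{j+1}(x) = \frac{x^{q^j} - x^{q^{j+1}}}{(1-x^{q^j})(1-x^{q^{j+1}})} > 0.
\end{equation*}

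For $V_{q,k}(X)$ the segmentation suggested by Proposition~\ref{Proposition2} is $n_j = k(j-1)+1$ for $j \geq 1$, so that $B_j = A_{k,j-1}$; the two required conditions then coincide with observations (i) and (ii) about $A_{k,j}$ stated just above the proposition. The only genuine source of difficulty is that $A_{k,j}$ carries the negative coefficient $1-k$ in front of $f_{k(j+1)}$, which blocks any naive termwise comparison between $A_{k,j}(x)$ and $A_{k,j+1}(x)$.

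I would resolve this by a telescoping rewrite, which is the main step of the argument. Introduce the slice $H_s(X) := \sum_{\gcd(m,q)=1} X^{m q^s}$, which isolates the $X^n$ with $\nu_q(n) = s$; then $f_r(X) = \sum_{s \geq r} H_s(X)$. Substituting this into $A_{k,j}(X) = \sum_{i=1}^{k-1} f_{kj+i}(X) + (1-k) f_{k(j+1)}(X)$ and collecting by $s$, the contributions with $s \geq k(j+1)$ cancel, leaving the clean identity
\begin{equation*}
A_{k,j}(X) = \sum_{i=1}^{k-1} i \cdot H_{kj+i}(X).
\end{equation*}

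From this identity both conditions are transparent. For (i), the right-hand side is a power series with nonnegative integer coefficients and positive coefficient $1$ at $X^{q^{kj+1}}$, so $A_{k,j}(x) > 0$ on $(0,1)$. For (ii), for each fixed $i \in \{1,\ldots,k-1\}$ the inequality $H_{kj+i}(x) > H_{k(j+1)+i}(x)$ holds termwise on $(0,1)$ because every exponent $m q^{kj+i}$ is strictly smaller than $m q^{k(j+1)+i}$ and $x \mapsto x^t$ is decreasing in $t$; summing over $i$ with the positive weights $i$ yields $A_{k,j}(x) > A_{k,j+1}(x)$. The main obstacle is therefore localized entirely in the telescoping identity: once it is in place, everything else reduces to elementary monotonicity of $f_r$ and of $H_s$ in their indices.
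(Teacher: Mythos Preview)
Your proof is correct and follows the same segmentation as the paper: $n_j=j$ for $V_q$, and blocks of length $k$ (so $B_j=A_{k,j-1}$, up to the harmless index shift from the paper's $n_j=kj+1$) for $V_{q,k}$. The paper's own proof merely invokes observations (i) and (ii) about $A_{k,j}$ stated just above the proposition without arguing them; your telescoping identity $A_{k,j}(X)=\sum_{i=1}^{k-1} i\,H_{kj+i}(X)$ supplies exactly the justification the paper leaves implicit, so your write-up is in fact more complete than the original.
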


\begin{proof}
For $V_q(X)$ the sequence of indices is $n_j = j$, and $B_j(x) = f_j(x)$ for all $j \geq 1$. For $V_{q,k} (X)$ the sequence of indices is $n_j = kj+1$. In this case, $B_j(x) = A_{k,j}(x)$  for all $j \geq 1$.
\end{proof}

\section{Proofs of Theorems~\ref{Theorem2} and~\ref{TheoremExtra}}\label{SectionPtoofTheorem23}
Before proving Theorem~\ref{Theorem2}, we first outline it
with a particular example.

\subsection{A not so random transcendental encounter} 
Let $q=3$ and let $b$ be some ordinary integer, for instance let $b=2024$. Our aim is to show that~$\alpha=V_q(1/b)$ is transcendental.
For this, we will make use of a result by Roth from 1955, which we write in the following form.
\begin{theorem}[{Roth~\cite{Roth1955}}]\label{TheoremRoth}
   Let $\alpha$ be a real number and let $\delta>0$.
Suppose there exists a sequence of distinct rational numbers 
$\{A_n/B_n\}_{n\ge 1}$ and an integer $n_0$
such that
\begin{equation*}
  \begin{split}
     \Big|\alpha -\sdfrac{A_n}{B_n}\Big| < \sdfrac{1}{B_n^{2+\delta}}\ \ \text{ for $n\ge n_0$.}
  \end{split}
\end{equation*}
Then $\alpha$ is transcendental.
\end{theorem}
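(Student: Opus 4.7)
The statement is Roth's 1955 theorem rephrased contrapositively, with the added observation that the hypothesis also rules out $\alpha$ rational. My plan is to split the proof into these two cases.

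\emph{Excluding $\alpha\in\QQ$.} Suppose $\alpha=p/q$ in lowest terms. For any $A_n/B_n\neq p/q$ the integer $pB_n-qA_n$ is nonzero, giving the Liouville-type bound
$|\alpha-A_n/B_n|=|pB_n-qA_n|/(qB_n)\geq 1/(qB_n)$.
Combined with the hypothesis $|\alpha-A_n/B_n|<1/B_n^{2+\delta}$, this forces $B_n^{1+\delta}<q$, so the $B_n$ are bounded. Since the distinct $A_n/B_n$ converge to $\alpha$, the $A_n$ are bounded as well, leaving only finitely many possible values of $A_n/B_n$ and contradicting the infinitude of distinct approximants in the hypothesis.

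\emph{Excluding $\alpha$ algebraic irrational.} This is Roth's theorem proper, the culmination of the Thue--Siegel--Dyson--Gelfond program. Given infinitely many approximants $A_n/B_n$ with $|\alpha-A_n/B_n|<B_n^{-(2+\delta)}$ and $\alpha$ algebraic of degree $d\geq 2$, one fixes a large integer $m$ and selects $m$ approximants with rapidly growing denominators $B_{n_1}\ll\cdots\ll B_{n_m}$. By Siegel's lemma one then constructs an auxiliary polynomial $P(X_1,\ldots,X_m)\in\ZZ[X_1,\ldots,X_m]$ with partial degrees $r_1,\ldots,r_m$ balanced so that $r_i\log B_{n_i}$ is nearly constant in $i$, integer coefficients of controlled height, and high ``index'' of vanishing at $(\alpha,\ldots,\alpha)$. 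The contradiction then arises from two competing estimates for $P(A_{n_1}/B_{n_1},\ldots,A_{n_m}/B_{n_m})$: Taylor expansion around $(\alpha,\ldots,\alpha)$ combined with the strong approximations $|\alpha-A_{n_i}/B_{n_i}|<B_{n_i}^{-(2+\delta)}$ produces a tiny upper bound, whereas clearing denominators shows that $\prod_i B_{n_i}^{r_i}\,P(\ldots)$, if nonzero, is an integer of absolute value at least $1$.

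The \textbf{main obstacle} is guaranteeing that $P$ does not vanish at the chosen rational point. This is exactly the content of Roth's celebrated nonvanishing lemma: a polynomial with integer coefficients and suitably controlled partial degrees cannot have simultaneously large index at rational points with sufficiently rapidly growing denominators. Its proof proceeds by induction on the number of variables $m$ via generalized Wronskians and is the true technical heart of Roth's argument; once it is secured, the remaining ingredients (Siegel's lemma for the construction of $P$, the Taylor upper bound, and the combinatorial optimization of the parameters $r_i$ against the $B_{n_i}$) are standard tools of diophantine approximation.
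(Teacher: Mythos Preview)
The paper does not prove this statement at all: it is quoted verbatim as Roth's 1955 theorem with a citation and then used as a black box in the proof of Theorem~\ref{Theorem4}. There is therefore no ``paper's own proof'' to compare against.

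Your outline is a faithful high-level sketch of the classical argument (Liouville bound to dispose of the rational case; Siegel's lemma, the auxiliary polynomial in $m$ variables, and Roth's nonvanishing lemma via generalized Wronskians for the algebraic-irrational case). As a summary it is correct, but of course it is only a sketch: the actual proof of Roth's lemma and the delicate choice of parameters occupy many pages and are not something one would reproduce in this paper. In the context of the present article the intended ``proof'' is simply the citation.
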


Since the series $\alpha=V_3\big(1/b\big)$ is absolutely convergent, we can regroup conveniently and add the terms so that it can be expressed as a sum with the generic terms:
\begin{align*}
     \sdfrac{{b}^{-3^k}}{
     1-{b}^{-3^k}} 
     = \sdfrac{1}{b^{3^k}-1}\ \text{ for $k\ge 1$.}
\end{align*}
Then, as $n$ tends to infinity, we have:
\begin{align}\label{eqalpha2024}
     \alpha &= \sdfrac{1}{{b^{3}-1}}
     + \sdfrac{1}{b^{3^{2}}-1}+\cdots
     + \sdfrac{1}{b^{3^{n}}-1}
     + O\bigg(\sdfrac{1}{b^{3^{n+1}}-1}\bigg)\,,
\end{align}
so that
\begin{equation*}
  \begin{split}
     \alpha &= \sdfrac{A_n}{B_n} + 
     O\bigg(\sdfrac{1}{b^{3^{n+1}}-1}\bigg)\,,
  \end{split}
\end{equation*}
where $B_n=b^{3^n}-1$ and $A_n$ 
is the corresponding numerator of the ratio that adds up all fractions in 
the main term of the approximation of $\alpha$ in~\eqref{eqalpha2024}.
Taking into account the size of the 
constant involved in the big $O(\cdot)$ 
estimate, we find that the following
explicit inequality holds
\begin{equation*}
  \begin{split}
     \Big|\alpha - \sdfrac{A_n}{B_n}\Big|
     \le \sdfrac{1}{B_n^{5/2}}\ \ 
     \text{ if $n\ge n_0$}\,,
  \end{split}
\end{equation*}
for some positive integer $n_0$.
This means that the hypotheses of Theorem~\ref{TheoremRoth} are 
satisfied with~$\delta = 1/2$, and consequently it follows that 
$\alpha$ is transcendental.

\subsection{A larger class of transcendental numbers}\label{SubsectionLargerClass}
The arguments from the previous section can be adapted to obtain a large class of transcendental numbers, which we will do in the following.

\begin{theorem}\label{Theorem4}
Let $q, M\ge 2$ be integers and let 
$a$ and $b$ be relatively prime positive integers with ratio $a/b<1$.
We let $W(X)$ be defined by
\begin{align*}
  W(X) := \sum _{j = 1}^\infty a_j f_j(X) 
  = \sum _{j = 1}^\infty a_j \sdfrac{X^{q^j}}{1-X^{q^j}},
\end{align*}
where all $a_j \in \ZZ$ satisfy $|a_j| < M$. 
If $W(X)$ is not a rational function and has positive segments, then $W\big(a/b\big)$ is transcendental for all $q \geq 2$ for which
\begin{align*}
  q > \frac{2\log b}{\log b -\log a}\,.
\end{align*}
\end{theorem}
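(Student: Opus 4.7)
Following the worked example with $q=3$, $b=2024$, the plan is to apply Roth's theorem (Theorem~\ref{TheoremRoth}) to $\alpha := W(a/b)$. Since $f_j(a/b) = a^{q^j}/(b^{q^j}-a^{q^j})$, the natural rational approximations are
\[
S_n := \sum_{j=1}^n a_j \cdot \sdfrac{a^{q^j}}{b^{q^j}-a^{q^j}}.
\]
The key algebraic observation is that $b^m - a^m \mid b^{km} - a^{km}$ for every $k \ge 1$; applied with $m = q^j$ and $k = q^{n-j}$, this gives $b^{q^j}-a^{q^j} \mid b^{q^n}-a^{q^n}$ for all $j \le n$. Consequently $S_n = A_n/B_n$ with the \emph{single} denominator $B_n := b^{q^n}-a^{q^n}$, so $\log B_n < q^n \log b$, a sharper bound than the one coming from the naive common denominator $\prod_{j\le n}(b^{q^j}-a^{q^j})$.

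Next I would estimate the tail. For every $j\ge 1$, since $(a/b)^{q^j} \le (a/b)^q < 1$,
\[
0 < \sdfrac{a^{q^j}}{b^{q^j}-a^{q^j}} \le \sdfrac{(a/b)^{q^j}}{1-(a/b)^q}.
\]
Setting $c := M/(1-(a/b)^q)$ and using that $\sum_{j>n}(a/b)^{q^j}$ is dominated by twice its first term (the subsequent terms shrink hyper-geometrically), one gets $|\alpha - S_n| \le 2c\,(a/b)^{q^{n+1}}$ for all large $n$. Taking logarithms and comparing with $B_n^{-(2+\delta)}$, the Roth inequality $|\alpha - A_n/B_n| < B_n^{-(2+\delta)}$ reduces, as $n\to\infty$, to $q(\log b - \log a) > (2+\delta)\log b$, i.e.\ $q > (2+\delta)\log b/(\log b - \log a)$. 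Since the hypothesis $q > 2\log b/(\log b - \log a)$ is strict, I can fix a small $\delta > 0$ making this hold for all sufficiently large $n$.

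Roth's theorem finally needs infinitely many \emph{distinct} rational approximations $A_n/B_n$. This is precisely where the ``positive segments'' hypothesis enters: specializing $x = a/b$ in the definition, the blocks $B_j(a/b) > 0$ mean that along the subsequence $n = n_j - 1$ the partial sums $S_{n_{j+1}-1} = S_{n_j - 1} + B_j(a/b)$ are strictly increasing, so the corresponding rationals are pairwise distinct (and differ from $\alpha$, since $\alpha > S_{n_j-1}$). Roth's theorem then yields the transcendence of $\alpha$. The step I expect to be most delicate is the divisibility cleanup $B_n = b^{q^n}-a^{q^n}$: without it, the upper bound on $\log B_n$ would be larger by a factor of roughly $q/(q-1)$, degrading the threshold from $q > 2\log b/(\log b - \log a)$ to $q > 1 + 2\log b/(\log b - \log a)$, so the sharpness of the theorem's hypothesis really hinges on this identity.
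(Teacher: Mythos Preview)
Your proposal is correct and follows essentially the same route as the paper: truncate, use the divisibility $b^{q^j}-a^{q^j}\mid b^{q^n}-a^{q^n}$ to get the single denominator $B_n=b^{q^n}-a^{q^n}$, bound the tail by a constant times $(a/b)^{q^{n+1}}$, reduce the Roth inequality to $q>(2+\delta)\log b/(\log b-\log a)$, and invoke positive segments to guarantee infinitely many distinct approximants. If anything, you are more explicit than the paper about why the common denominator collapses and about exactly how the positive-segments hypothesis forces distinctness of the $S_{n_j-1}$.
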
 
\begin{proof}

Let $q \geq 2$ and let $a<b$ be relatively prime positive integers. 
Then
\begin{equation}\label{eqVabE}
    W(a/b) = a_1 \sdfrac{\big(a/b\big)^{q}}{1-\big(a/b\big)^{q}}
    + a_2 \sdfrac{\big(a/b\big)^{q^2}}{1-\big(a/b\big)^{q^2}}
    +\cdots 
    + a_n \sdfrac{\big(a/b\big)^{q^n}}{1-\big(a/b\big)^{q^n}}
     +E_{p,n}(a,b),
\end{equation}
where $E_{q,n}(a,b)$ collects the sum of all fractions of higher order from the corresponding analogue of \eqref{eqVfractions}.
In order for $E_{q,n}(a,b)$ to be small, we take $n$ sufficiently large such that
\begin{equation}\label{eqboundn12}
    \big(a/b\big)^{q^n}<1/2,
\end{equation}
and the corresponding queue is positive. (All the indices considered in the proof will belong to the sub-sequence of positive segments.) The inequality $(a/b)^{q^n} < 1/2$ implies that for $k>n$, the general term of the sum $E_{q,n}(a,b)$ is
\begin{equation*}
  \left | a_k \sdfrac{\big(a/b\big)^{q^k}}{1-\big(a/b\big)^{q^k}} \right | \le 2 M \big(a/b\big)^{q^k}.
\end{equation*}
Using this inequality in the definition of $E_{q,n}(a,b)$,
we can further bound it with the sum of a geometric series as follows
\begin{equation*}
  \begin{split}
    \big|E_{q,n}(a,b)\big| &\le 
    2M\big(a/b\big)^{q^{n+1}} + 2M\big(a/b\big)^{q^{n+2}} 
    + 2M\big(a/b\big)^{q^{n+3}}+\cdots\\
     & = 2M\big(a/b\big)^{q^{n+1}}\bigg(
    1 + \big(a/b\big)^{q^{n+2}-q^{n+1}}
    + \big(a/b\big)^{q^{n+3}-q^{n+1}} +\cdots\bigg)
    \,.
  \end{split}
\end{equation*}
Here, the general term is bounded using 
\begin{equation*}
    x^{q^{n+k}-q^{n+1}}
    \le \big(x^{q^n}\big)^{k-1} 
    \le 2^{-k+1},
\end{equation*}
for $k\ge 1$, provided that $0< x^{q^n}<1/2$.
Then, with $x=a/b$, as the assumption~\eqref{eqboundn12} holds, 
it follows 
\begin{equation}\label{eqBoundE}
  \begin{split}
 \big|E_{q,n}(a,b)\big| 
 \le 2M\big(a/b\big)^{q^{n+1}}
     \big(1+2^{-1}+2^{-2}+\cdots\big)
 \le 4M \big(a/b\big)^{q^{n+1}}.
  \end{split}
\end{equation}

Note that all denominators of the fractions in the main term of 
$W(a/b)$ in \eqref{eqVabE}, viewed as polynomials in $a/b$, divide
$1-\big(a/b\big)^{q^n}$.
Next, we define $A_n$ and $B_n$ as the numerator and the denominator
of the sum of fractions of the principal term, that is,
\begin{equation}\label{eqAB}
 A_n := B_n\sum_{k=1}^n a_k \sdfrac{a^{q^k}}{b^{q^k}-a^{q^k}},\ \ 
 \text{ and }\ \ B_n := b^{q^n}-a^{q^n}.
\end{equation}

On combining~\eqref{eqVabE}, \eqref{eqAB} and~\eqref{eqBoundE}, we obtain
\begin{equation}\label{eqVAB}
    \Big|W(a/b)-\sdfrac{A_n}{B_n}\Big|\le 4 M \big(a/b\big)^{q^{n+1}}.
\end{equation}

Since we want to make use of Roth's theorem~\ref{TheoremRoth}, 
we must ensure that the hypotheses are met, 
of which, taking the inequality \eqref{eqVAB} into account, it remains to see that 
there exists $\delta>0$ such that
\begin{equation}\label{eqBtare}
      4 M \big(a/b\big)^{q^{n+1}} < \sdfrac{1}{B_n^{2+\delta}}.
\end{equation}
Replacing $B_n$ by its definition in~\eqref{eqAB},
we find that for~\eqref{eqBtare} to hold,
it is enough to show that there exists $\delta>0$ and an integer $n_0$
such that 
\begin{equation}\label{eqBtare2}
      4 M \big(b^{q^{n}}\big)^{2+\delta} a^{q^{n+1}} < b^{q^{n+1}}\ \
      \text{ for $n\ge n_0$}.
\end{equation}
Taking logarithms and dividing both sides by $q^n$,~\eqref{eqBtare2} is equivalent with
\begin{equation}\label{eqBtare3}
    \sdfrac{\log (4M)}{q^{n}}
      +(2+\delta)\log b +q\log a < q\log b\ \
      \text{ for $n\ge n_0$}.
\end{equation}
As the inequality in~\eqref{eqBtare3} is strict,
it is fully achieved when $n$ is sufficiently large
if it holds without the single term that depends on $n$, that is, if 
there exists $\delta>0$ such that
\begin{equation}\label{eqBtare4}
      (2+\delta)\log b +q\log a < q\log b.
\end{equation}
Turning back to the exponential form, we see that~\eqref{eqBtare4}
is equivalent with
\begin{equation}\label{eqBtare5}
      b^{2+\delta}a^q< b^q.
\end{equation}
Once again, due to the strict inequality~\eqref{eqBtare5}, 
the existence of a sufficiently small 
$\delta>0$ is ensured if the inequality holds even without the factor $b^\delta$ 
on the left side, that is if 
\begin{equation}\label{eqBtare6}
      b^{2}a^q < b^q.
\end{equation}
But this is exactly the initial requirement~\eqref{eqTheorem1} first imposed on $a$ and $b$ in the hypothesis of Theorem~\ref{Theorem2}. 

As we have seen following 
now backwards transformations~\eqref{eqBtare5}--\eqref{eqBtare},
this ensures that the last requirement~\eqref{eqBtare} 
needed for the application of Roth's theorem~\ref{TheoremRoth}
is now fulfilled. As a consequence, we have proved that if integers $a,b$
satisfy the hypotheses of Theorem~\ref{Theorem4}, then 
$W\big(a/b\big)$ is transcendental 
except for at most a finite number of integers~$q$.
To be more specific, equation~\eqref{eqBtare6} indicates that 
the potential exceptions are bounded in size by
$2\log b/(\log b-\log a)$.
This concludes the proof of Theorem~\ref{Theorem4}.
\end{proof}

\medskip
\subsection{Proof of Theorem~\ref{TheoremExtra}}\label{SubsectionPProofTheorem2}
If we take $W(X) = V_q(X)$, respectively $W(X) = V_{q,k}(X)$, we obtain the results stated in Theorem \ref{Theorem2} for the transcendence of their values computed in $a/b$. 
The last part of the proof of Theorem~\ref{Theorem2}, 
that regarding the transcendence of $V_q\big(\omega a/b\big)$, respectively $V_{q, k}\big(\omega a/b\big)$, where
$\omega$ is a certain root of unity,
follows from Proposition~\ref{Proposition1} and from Proposition \ref{Proposition2}.
Indeed, by contradiction, if $V_q\big(\omega a/b\big)$ were algebraic, 
then since we already know that $V_q\big(a/b\big)$ is transcendental, 
it would follow that the difference 
$V_q\big(\omega a/b\big)- V_q\big(a/b\big)$
is also transcendental.
Yet, this directly contradicts the conclusion of Proposition~\ref{Proposition1}, 
which implies that the difference 
$V_q\big(\omega a/b\big)- V_q\big(a/b\big)$ is an algebraic number. The same for $V_{q,k}\big (\omega a/b \big )$.
This concludes the proof of Theorem~\ref{Theorem2}.
\qed

\subsection{A generalized version of Theorem~\ref{TheoremExtra}}\label{SubsectionProofTheorem3}
The following is a more general result that includes the
statement that  $V_q(X)$ and $V_{q,k}(X)$ are not algebraic over $\AA(X)$.
\begin{theorem}\label{TheoremExtra1}
Let $q \geq 2$ and let $M \ge 2$ be a constant integer. 
Let
\begin{align*}
  W(X) = \sum_{j = 1}^\infty a_j f_j(X) 
  = \sum _{j=1}^\infty a_j \sdfrac{X^{q^j}}{1-X^{q^j}}, 
\end{align*}
where all $a_j$ are integers satisfying $|a_j| < M$ 
such that $W(X)$ is not a rational function 
and has positive segments.  
Suppose that $\AA$ is a field of characteristic $0$ 
consisting of algebraic numbers only. 
Then $W(X) \in \AA[[X]]$ is not algebraic over 
the field of rational functions~$\AA(X)$. 
\end{theorem}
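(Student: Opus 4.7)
I would prove Theorem~\ref{TheoremExtra1} by showing that $W(X)$ admits the unit circle as a natural boundary, a property incompatible with algebraicity over $\AA(X)$, since the inclusions $\AA \subseteq \overline{\QQ} \subseteq \CC$ force any such $W$ to be algebraic over $\CC(X)$ as well.

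First I would verify that the coefficient of $X^n$ in $W$ is the integer $c_n = \sum_{j:\, q^j \mid n} a_j$, so that $|c_n| \le M \nu_q(n) \le M \log_q n$ and the radius of convergence is at least~$1$. Next I would compute the residue of $W$ at an arbitrary primitive $q^m$-th root of unity $\zeta$. Only the terms $f_j$ with $j \ge m$ are singular at $\zeta$, and a short local expansion using $X^{q^j} = 1 + q^j \zeta^{-1}(X-\zeta) + O((X-\zeta)^2)$ gives each such $f_j$ a simple pole at $\zeta$ with residue $-\zeta/q^j$, whence
\[
  \mathrm{Res}_{X=\zeta}\, W(X) \;=\; -\zeta \sum_{j\ge m}\frac{a_j}{q^j}.
\]
If these residues vanished for every $m\ge 1$, the telescoping identity
$\sum_{j\ge m} a_j q^{-j} - \sum_{j\ge m+1} a_j q^{-j} = a_m q^{-m}$
would force $a_m = 0$ for all $m$, giving $W \equiv 0$ and contradicting non-rationality. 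Therefore infinitely many values of $m$ yield a non-zero residue, producing a genuine pole of $W$ at every primitive $q^m$-th root of unity for those $m$; as $m$ ranges over this infinite set, these roots are dense on $|X|=1$, so the radius of convergence equals~$1$ and the unit circle is a natural boundary of $W$. An alternative route, once integrality and radius~$1$ are in hand, is to invoke the theorem of P\'olya--Carlson exactly as in the proof of Proposition~\ref{Propositionpnonholonomic}.

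Suppose now for contradiction that $W(X)$ is algebraic over $\AA(X)$, and hence over $\CC(X)$; then $W$ satisfies $P(X, W(X))=0$ for some $P(X,Y)\in\CC[X,Y]$ of positive degree in $Y$. Because every algebraic function over $\CC(X)$ is holonomic, the result of Henrici~\cite[Theorem 9.1]{Henrici1974} invoked already in the proof of Proposition~\ref{Propositionpnonholonomic} allows $W$ to be analytically continued along any path in $\CC$ that avoids the finite exceptional set consisting of the zeros of the $Y$-discriminant of $P$ together with those of its leading coefficient in $Y$. This contradicts the density of singularities established above, completing the proof.

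The decisive step is the non-vanishing of the residues, which via the telescoping identity relies only on the hypothesis that $W$ is not rational; I do not expect the positive-segments condition to be needed here, as it played its role only in the Roth-type transcendence arguments of Theorem~\ref{Theorem4}.
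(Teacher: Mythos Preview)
Your argument is correct, and it takes a genuinely different route from the paper's own proof.  The paper deduces Theorem~\ref{TheoremExtra1} from Theorem~\ref{Theorem4}: assuming an algebraic relation $\sum g_j(X)W(X)^j=0$ with $g_j\in\AA(X)$, one evaluates at a rational $a/b\in(0,1)$ chosen to avoid the finitely many poles of the $g_j$ and to satisfy $q>2\log b/(\log b-\log a)$; then $W(a/b)$ would be algebraic, contradicting the Roth-based transcendence of Theorem~\ref{Theorem4}.  Your approach instead recycles the mechanism behind Proposition~\ref{Propositionpnonholonomic}: the coefficients $c_n=\sum_{1\le j\le\nu_q(n)}a_j$ are integers of size $O(\log n)$, so the radius of convergence is~$1$, P\'olya--Carlson forces the unit circle to be a natural boundary (since $W$ is not rational), and then ``algebraic over $\AA(X)\Rightarrow$ algebraic over $\CC(X)\Rightarrow$ holonomic $\Rightarrow$ finitely many singular points'' yields the contradiction.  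Two remarks.  First, your residue computation is only heuristic as written: $W$ is defined a priori only in $|X|<1$, so speaking of a ``pole'' at $\zeta$ presupposes what you are trying to prove; one would need to show, say, that $(X-\zeta)W(X)$ has a nonzero radial limit, which requires justifying the interchange of limit and infinite sum.  Your P\'olya--Carlson alternative sidesteps this entirely and is the cleaner path.  Second, your closing observation is exactly right and worth stressing: your argument uses neither the positive-segments hypothesis nor Roth's theorem, and in fact yields the stronger conclusion that $W$ is not holonomic over $\CC(X)$ for any subfield $\AA\subseteq\CC$.  The paper's route, by contrast, packages Theorems~\ref{Theorem2}, \ref{TheoremExtra} and~\ref{Theorem4} into a single Roth-driven narrative, at the cost of carrying the positive-segments assumption through to a place where it is not actually needed.
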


\begin{proof}
Suppose that $W(X)$ satisfies an equation
\begin{align*}
 g_r(X) W^r(X) + g_{r-1}(X)W^{r-1}(X) + \dots + g_0(X) = 0,   
\end{align*}
where $g_r(X) \neq 0$ and $g_j(X) \in \AA(X)$ 
for $0\le j\le r$.
The rational functions $g_j(X)$ are not defined in a finite set of algebraic numbers, which are poles for at least one of them. Then there will always exist a rational number $a/b \in(0,1)$ that satisfies the hypotheses of Theorem~\ref{Theorem4} for $W(X)$, and which is different from all the finitely many exceptional algebraic numbers in which at least one rational function $g_j(X)$ is not defined. 
Consequently
\begin{align*}
  g_r(a/b) W(a/b)^r + g_{r-1}(a/b)W(a/b)^{r-1} + \cdots + g_0(a/b) = 0.  
\end{align*}
But this would mean that $W(a/b)$ is algebraic, which contradicts Theorem~\ref{Theorem4}. 
This contradiction invalidates the assumption made, thus concluding the proof of Theorem~\ref{TheoremExtra1}.
\end{proof}

\section{Application to some automatic sequences} 

Let $\cS, \cT$ be finite sets and let $w \geq 2$ be integer. 
A \emph{finite deterministic automaton with output} is a structure $\Auto = (\cS, w, \cT, s_0, \delta, \pi)$ defined as follows: the \emph{state set} is $\cS$ (also called the \emph{input alphabet}) is the set $\{0, 1, \dots, w-1\}$, the \emph{output alphabet} is $\cT$, the \emph{start state} 
is~$s_0$,  the \emph{transition function} is $\delta : \cS \times \{0, 1, \dots, w-1\} \rightarrow \cS $, and the \emph{output  function} is~$\pi : \cS \rightarrow \cT$. 

Let $\{0, 1, \dots, k-1\}^*$ be the set of all finite words built up with $0, 1, \dots, k-1$. 

We let $\tilde \delta : \cS \times \{0, 1, \dots, k-1\}^* \rightarrow \cS$ be the \emph{transitive closure} 
of the transition function~$\delta$ be defined as follows.
If the word $c \in \{0, 1, \dots, k-1\}^* $ has length $|c| = 1$ then $\tilde \delta(s_0, c) = \delta(s_0, c) $. If  $c \in \{0, 1, \dots, k-1\}^* $ is any word and $\tilde \delta(s_0, c) \in \cS$ is known, then 
\begin{align*}
  \tilde \delta (s_0, ac) 
  = \delta\big(\tilde \delta(s_0, c), a\big)\  
  \text{ for every letter 
      $a \in \{0, 1, \dots, k-1\}$.}
\end{align*}

The sequence $a : \NN \rightarrow \cT$ is called $w$-\emph{automatic} if there is a finite deterministic automaton $\Auto$ such that $a(n) = \pi\big(\tilde \delta (s_0, n_w)\big)$
for all $n \in \NN$, where the word $n_w \in \{0, 1, \dots, w-1\}^*$ denotes the representation of $n$ in base $w$. 

\begin{theorem}\label{PropAutomaticNuModK}
Let $w, k \geq 2$ be integers. 
Consider the sequence 
\mbox{$a: \NN \rightarrow \{0, 1, \dots, k-1\}$} given by
\begin{align*}
  a(n) = \nu_w(n) \pmod k.   
\end{align*}
(Here, the representative of the residue class modulo $k$ is taken from the interval $[0,k-1]$.)
Then the sequence $\{a(n)\}_{n\ge 1}$
is $w$-automatic.
\end{theorem}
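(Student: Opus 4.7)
The plan is to exhibit, explicitly, a finite deterministic automaton with output $\Auto$ whose processing of the base-$w$ digits of $n$ returns $\nu_w(n) \pmod k$. The one elementary arithmetic fact needed is that $\nu_w(n)$ coincides with the number of trailing zeros of $n$ written in base $w$: if $n = d_0 + d_1 w + \cdots + d_L w^L$ with $d_L \neq 0$, then $w \mid n$ iff $d_0 = 0$, and iterating with $n/w$ immediately gives $\nu_w(n) = \min\{\, i \ge 0 \,:\, d_i \neq 0 \,\}$.

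The recursive identity $\tilde\delta(s_0, ac) = \delta(\tilde\delta(s_0,c), a)$ in the paper's definition of $\tilde\delta$ means that the rightmost letter of the input word is processed first. Writing the base-$w$ representation $n_w = d_L d_{L-1} \cdots d_0$ in the customary most-significant-first order, the automaton therefore reads $d_0$ first and $d_L$ last. Taking this into account, I would put
\begin{equation*}
  \cS := \{0, 1, \ldots, k-1\} \cup \{\ast_0, \ast_1, \ldots, \ast_{k-1}\},
\end{equation*}
with start state $s_0 := 0$, output alphabet $\cT := \{0, 1, \ldots, k-1\}$, and transitions and outputs given by
\begin{align*}
  \delta(c, 0) &= (c+1) \bmod k &&\text{for } c \in \{0,\ldots,k-1\},\\
  \delta(c, d) &= \ast_c &&\text{for } c \in \{0,\ldots,k-1\},\ d \neq 0,\\
  \delta(\ast_i, d) &= \ast_i &&\text{for every letter } d,\\
  \pi(c) &= c,\quad \pi(\ast_i) = i.
\end{align*}
The states $0,1,\ldots,k-1$ play the role of a counter modulo $k$ for the trailing zeros seen so far, while the frozen states $\ast_0,\ldots,\ast_{k-1}$ store the final value of that counter once a nonzero digit has appeared.

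Correctness then follows from a short induction on the length $j+1$ of the already-processed prefix $d_0 d_1 \cdots d_j$: as long as all processed digits vanish the state is $(j+1) \bmod k$, and the first nonzero digit $d_{\nu_w(n)}$ triggers a transition to $\ast_{\nu_w(n) \bmod k}$, which is then preserved by every subsequent transition. Applying $\pi$ to the final state recovers $a(n) = \nu_w(n) \pmod k$, so $\{a(n)\}$ is $w$-automatic as claimed. There is no real obstacle here: the only two points requiring a bit of care are matching the right-to-left reading convention implicit in the paper's $\tilde\delta$, and, if desired, assigning a conventional value to $a(0)$ (e.g.\ by declaring the base-$w$ representation of $0$ to be the empty word, so that $a(0) = \pi(s_0) = 0$).
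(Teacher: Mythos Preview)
Your proof is correct and follows essentially the same approach as the paper: both construct the same $2k$-state automaton, with your states $c$ and $\ast_i$ corresponding to the paper's $\balpha_k(c,0)$ and $\balpha_k(i,1)$, and identical transitions and outputs. Your exposition is in fact somewhat more careful than the paper's---you make the least-significant-digit-first reading order explicit and give the inductive correctness argument, whereas the paper simply states the automaton and asserts the conclusion.
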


\begin{proof}
We consider the automaton 
$\Auto = (\cS, w, \{0, 1, \dots, k-1\}, s_0,\delta,\pi)$ 
defined as follows. 

Let the state set $\cS$ be given by 
\begin{align*}
  \cS = \big\{ \balpha_k(j,b)  : \balpha_k(j,b) = j \pmod k 
  \text{ for }
  j \in \{0, \dots, k-1\}  \text{ and } b \in \{0, 1\} \big\}.  
\end{align*}

The start state is $s_0 = \balpha_k(0,0)$.
Next, the transition function $\delta$ works based on the set of rules:
\begin{align*}
  \delta\big( \balpha_k(j,0), 0\big) 
    & = \balpha_k(j+1,0), \\
    \delta\big( \balpha_k(j,0),x\big) 
    & = \balpha_k(j,1), \\
    \delta\big( \balpha_k(j,1),a\big) 
    & = \balpha_k(j,1),
\end{align*} 
for all $x = 1, 2, \dots, k-1$ and for all $a = 0, 1, \dots, k-1$,
and
the output function $\pi$ is defined~by 
\begin{align*}
  \pi\big(\balpha_k(j,b)\big) = j.
\end{align*} 

Then, let us observe that 
\begin{align*}
  \pi\big(\tilde \delta(s_0, n_w)\big) 
  = \nu_w(n) \pmod k\ \text{ for every $n \geq 1$.}
\end{align*} 
This concludes the proof of Theorem~\ref{PropAutomaticNuModK}.
\end{proof}

A result of Christol et al.~\cite[Theorem 1]{CKM1980} (see also Allouche and \mbox{Shallit\cite[Theorem 12.2.5]{AutSeq}})
provides an algebraic characterisation of the 
$q$-automatic sequences, where $q = p^s$ is a prime-power. 
Let $\FF_q$ be a finite field that is large enough
such that there exists a one-to-one function 
$\beta : \cT \rightarrow \FF_q$. 
Consider the following formal power series associated with the sequence  $\{a(n)\}_{n\ge 0}$:
\begin{align*}
  a(X) = \sum _{j = 0}^\infty 
         \beta\big(a(j)\big)X^j \in \FF_q[[X]].
\end{align*}
Then the sequence $\{a(n)\}_{n\ge 0}$ is $q$-automatic if and only if $a(X)$ is algebraic over the field of rational functions $\FF_q(X)$. It is also known that if $p$ is a prime number, a sequence is $p$-automatic if and only if it is $p^s$-automatic for each prime-power $p^s$. (For the last  result, see Allouche and Shallit~\cite[Theorem 6.6.4]{AutSeq}.)
Also, according to 
Denef and \mbox{Lipshitz~\cite[Theorem 4.1]{DenefLipshitz}}, 
if $p^s$ is a prime-power, a sequence $a : \NN \rightarrow \ZZ / p^s \ZZ$ 
is $p$-automatic if and only if there is a sequence $b : \NN \rightarrow \ZZ_p$, where $\ZZ_p$ is the ring of $p$-adic integers, such that 
for all $n \in \NN$, $a(n) \equiv b(n) \pmod {p^s}$ and the 
power series $\sum b(j)X^j$ is algebraic over~$\ZZ_p[[X]]$.

Let now $p$ be a prime number and 
let $s \geq 1$ and $k \geq2 $ be integers. By $\nu_{p^s}(n)$ we understand the largest integer
$v$ such that $p^{sv}$ divides $n$. 
(If $s = 1$, this is $\nu_{p^s} = \nu_p$, the classical $p$-adic valuation.) 
We consider the formal power series
\begin{align*}
  V_{p^s,k} (X) = \sum _{j \geq 1} 
  \big(\nu_{p^s}(j) \pmod k\big) X^j  
\end{align*}
over various rings, which remain to be specified. 
This object has a series of properties that we list 
next.
\begin{theorem}\label{propAutomatic}
Let $p$ be a prime number 
and let $s \geq 1$, and $k \geq 2$ be integers. 
The following statements hold true:
\begin{enumerate}
\renewcommand{\theenumi}{\roman{enumi}}
\item\label{part81} 
The sequence $\{a(n)\}_{n\ge 1}$ given by $a(n) = \nu_{p^s}(n) \pmod k$ is both $p^s$-automatic and \mbox{$p$-automatic}. 
\item\label{part82} 
For every $q = p^s \geq k$ and for every one-to-one function $\beta : \{0, \dots, k\} \rightarrow \FF_q$, the series 
\begin{align*}
  \sum _{j = 0}^\infty 
  \beta\big(\nu_{p^s}(j) \pmod k\big)X^j 
  \in \FF_{q}[[X]]  
\end{align*}
is algebraic over $\FF_{q}(X)$. 
\item\label{part83} 
There is a sequence of $p$-adic integers 
$b : \NN \rightarrow \ZZ_p $ such that the formal power series $\sum b(j) X^j \in \ZZ_p [[X]]$ is algebraic over 
$\ZZ _p (X)$ 
(where $\ZZ_p$ is the ring of $p$-adic integers) and such that
\begin{align*}
 \big(\nu_{p^s}(j) \pmod k\big) \equiv b(j) \pmod {p^s}\ \text{for all $j \geq 1$.}   
\end{align*}
\item\label{part84}  For any field $\AA$ of characteristic $0$ consisting of algebraic numbers only, the power series $V_{p^s, k}(X) \in \AA[[X]]$ is not algebraic over $\AA(X)$.
\end{enumerate}
\end{theorem}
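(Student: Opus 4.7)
The plan is to dispatch each of the four parts of Theorem~\ref{propAutomatic} by invoking a result already established earlier in the paper, so that the bulk of the work has been done by the time we reach this statement.

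For part (\ref{part81}), I would apply Theorem~\ref{PropAutomaticNuModK} with $w=p^s$ to conclude that $\{a(n)\}$ is $p^s$-automatic, and then upgrade to $p$-automaticity via the Allouche--Shallit equivalence (Theorem~6.6.4 of~\cite{AutSeq}) recalled just before the statement. For part (\ref{part82}), the hypothesis $q=p^s\ge k$ produces an injection $\beta$ of the value set $\{0,\ldots,k-1\}$ into $\FF_q$, and the Christol--Kamae--Mend\`es~France theorem cited in the text converts the $p^s$-automaticity from part (\ref{part81}) into algebraicity of $\sum_{j\ge 0}\beta(a(j))X^j$ over $\FF_q(X)$. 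For part (\ref{part83}), the integer-valued sequence $a(n)$, viewed modulo $p^s$ as a sequence $\bar a\colon\NN\to\ZZ/p^s\ZZ$, remains $p$-automatic because the image of a $p$-automatic sequence under any fixed output map is again $p$-automatic; the Denef--Lipshitz theorem recalled in the excerpt then supplies a lift $b\colon\NN\to\ZZ_p$ with $b(n)\equiv a(n)\pmod{p^s}$ and $\sum b(j)X^j$ algebraic in the required sense.

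Part (\ref{part84}) is the substantive one, and it is exactly a direct application of Theorem~\ref{TheoremExtra1} to $W(X)=V_{p^s,k}(X)$ with $q$ replaced by $p^s$. I would verify the three hypotheses in turn: Proposition~\ref{Proposition2} gives the expansion $V_{p^s,k}(X)=\sum_{j\ge 1}a_j f_j(X)$ with $a_j\in\{1,\,1-k\}$, so all $|a_j|<k+1$ and the constant $M:=k+1$ works; Proposition~\ref{PropPositiveQueues} supplies the positive-segments hypothesis; and Proposition~\ref{Propositionpknonholonomic} (or equivalently Theorem~\ref{Theorem1}) rules out rationality, since a rational power series is trivially holonomic and $V_{p^s,k}$ is not. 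Theorem~\ref{TheoremExtra1} then yields that $V_{p^s,k}(X)\in\AA[[X]]$ is not algebraic over $\AA(X)$.

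There is no serious technical obstacle, since each part is a clean corollary of machinery already assembled. The only steps that require explicit attention are in part (\ref{part83}), where one must record that post-composing a $p$-automatic sequence with the reduction $\ZZ\twoheadrightarrow\ZZ/p^s\ZZ$ still yields a $p$-automatic sequence so that Denef--Lipshitz applies, and in part (\ref{part82}), where the size condition $q\ge k$ is precisely what guarantees the injection $\beta$ into $\FF_q$ exists in the first place.
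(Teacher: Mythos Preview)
Your proposal is correct and follows essentially the same approach as the paper: for parts~(\ref{part81})--(\ref{part83}) the paper likewise invokes Theorem~\ref{PropAutomaticNuModK} together with the $p$/$p^s$-automaticity equivalence, Christol's theorem, and Denef--Lipshitz respectively, and for part~(\ref{part84}) it cites Theorem~\ref{TheoremExtra} directly (which is the specialization of Theorem~\ref{TheoremExtra1} to $V_q$ and $V_{q,k}$, so your route via verifying the hypotheses of Theorem~\ref{TheoremExtra1} amounts to the same thing). Your write-up is in fact more explicit than the paper's terse proof, particularly in spelling out why Denef--Lipshitz applies in part~(\ref{part83}) and in checking the three hypotheses for part~(\ref{part84}).
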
 

We observe that the last item is also related to Theorem~\ref{Proposition2}. 

\begin{proof}
To prove part~\eqref{part81}, we put $w = p^s$ in Theorem~\ref{PropAutomaticNuModK} and recall that for every prime~$p$,  a sequence is 
$p^s$-automatic for some $s \geq 2$ if and only if it is 
$p$-automatic. 

Part~\eqref{part82} follows from the theorem of Christol.

Part~\eqref{part83} follows from the theorem of Denef and Lipshitz. 

Part~\eqref{part84}  follows from Theorem \ref{TheoremExtra}.
\end{proof}    

\begin{example}
The most typical example of a sequence 
like those covered by Theorem~\ref{propAutomatic} is that of
the \emph{period-doubling sequence}, 
$0, 1, 0, 0, 0, 1, 0, 1$, 
$0, 1, 0, 0, 0, 1, 0, 0, 0, 1, 0, 0,\dots$,
which is defined by
\begin{align*}
   a(n) = \nu_2(n) \pmod 2, \ \text{ for $n\ge 1$.}
\end{align*}
We mention that among other properties it has, 
$\{a(n)\}_{n\ge 1}$
provides the Von Koch snowflake walk in the plane~\cite[{\href{https://oeis.org/A096268}{A096268}}]{oeis}. 
\end{example}

\vspace{11mm}


\begin{thebibliography}{10}

\bibitem{AutSeq}
Jean-Paul Allouche and Jeffrey Shallit.
\newblock {\em Automatic sequences. {Theory}, applications, generalizations}.
\newblock Cambridge: Cambridge University Press, 2003.
\newblock \href {https://doi.org/10.1017/CBO9780511546563} {\path{doi:10.1017/CBO9780511546563}}.

\bibitem{Bieber1968}
Ludwig Bieberbach.
\newblock {\em Funktionentheorie}.
\newblock Johnson, New York, 1968.

\bibitem{Car1921}
Fritz Carlson.
\newblock {\"U}ber {Potenzreihen} mit ganzzahligen {Koeffizienten}.
\newblock {\em Math. Z.}, 9:1--13, 1921.
\newblock \href {https://doi.org/10.1007/BF01378331} {\path{doi:10.1007/BF01378331}}.

\bibitem{CKM1980}
G.~Christol, T.~Kamae, Michel Mend{\`e}s~France, and G{\'e}rard Rauzy.
\newblock Suites alg{\'e}briques, automates et substitutions.
\newblock {\em Bull. Soc. Math. Fr.}, 108:401--419, 1980.
\newblock \href {https://doi.org/10.24033/bsmf.1926} {\path{doi:10.24033/bsmf.1926}}.

\bibitem{DenefLipshitz}
Jan Denef and Leonard Lipshitz.
\newblock Algebraic power series and diagonals.
\newblock {\em J. Number Theory}, 26:46--67, 1987.
\newblock \href {https://doi.org/10.1016/0022-314X(87)90095-3} {\path{doi:10.1016/0022-314X(87)90095-3}}.

\bibitem{FGS2005}
Philippe Flajolet, Stefan Gerhold, and Bruno Salvy.
\newblock On the non-holonomic character of logarithms, powers, and the $n$th prime function.
\newblock {\em The electronic journal of combinatorics}, 11(2), (The Stanley Festschrift volume):1--16, 2005.
\newblock \href {https://doi.org/10.37236/1894} {\path{doi:10.37236/1894}}.

\bibitem{Ger2004}
Stefan Gerhold.
\newblock On some non-holonomic sequences.
\newblock {\em Electron. J. Combin.}, 11(1):Research Paper 87, 8, 2004.
\newblock \href {https://doi.org/10.37236/1840} {\path{doi:10.37236/1840}}.

\bibitem{Henrici1974}
Peter Henrici.
\newblock {\em Applied and computational complex analysis, vol. 2}.
\newblock John Wiley, New York, 1974.

\bibitem{KP2011}
Manuel Kauers and Peter Paule.
\newblock {\em The concrete tetrahedron. {Symbolic} sums, recurrence equations, generating functions, asymptotic estimates}.
\newblock Texts Monogr. Symb. Comput. New York, NY: Springer, 2011.
\newblock \href {https://doi.org/10.1007/978-3-7091-0445-3} {\path{doi:10.1007/978-3-7091-0445-3}}.

\bibitem{Mar2006}
S.~S. Marchenkov.
\newblock Superpositions of elementary arithmetic functions.
\newblock {\em Diskretn. Anal. Issled. Oper., Ser. 1}, 13(4):33--48, 2006.

\bibitem{Mazz2002}
Stefano Mazzanti.
\newblock Plain bases for classes of primitive recursive functions.
\newblock {\em MLQ Math. Log. Q.}, 48(1):93--104, 2002.
\newblock \href {https://doi.org/10.1002/1521-3870(200201)48:1<93::AID-MALQ93>3.0.CO;2-8} {\path{doi:10.1002/1521-3870(200201)48:1<93::AID-MALQ93>3.0.CO;2-8}}.

\bibitem{oeis}
{OEIS Foundation Inc\!\!}
\newblock The {O}n-{L}ine {E}ncyclopedia of {I}nteger {S}equences, 2023.
\newblock Published electronically at \url{http://oeis.org}.

\bibitem{Pol1916}
Georg P\'olya.
\newblock {\"U}ber {P}otenzreihen mit ganzzahligen {K}oeffizienten.
\newblock {\em Math. Ann.}, 77(4):497--513, 1916.
\newblock \href {https://doi.org/10.1007/BF01456965} {\path{doi:10.1007/BF01456965}}.

\bibitem{PruOtherTwo}
Mihai Prunescu.
\newblock {On other two representations of the C-recursive integer sequences by terms in modular arithmetic}, 2024.
\newblock \href {https://arxiv.org/abs/2406.06436} {\path{arXiv:2406.06436}}.

\bibitem{PSAxiv}
Mihai Prunescu and Lorenzo Sauras-Altuzarra.
\newblock On the representation of {C}-recursive integer sequences by arithmetic terms, 2024.
\newblock \href {https://arxiv.org/abs/2405.04083} {\path{arXiv:2405.04083}}.

\bibitem{PSAprep}
Mihai Prunescu and Lorenzo Sauras-Altuzarra.
\newblock On the representation of number-theoretic functions by arithmetic terms, 2024.
\newblock \href {https://arxiv.org/abs/2407.12928} {\path{arXiv:2407.12928}}.

\bibitem{PS2024}
Mihai Prunescu and Joseph Shunia.
\newblock Arithmetic-term representations for the greatest common divisor, 2024.
\newblock URL: \url{https://arxiv.org/abs/2411.06430}, \href {https://arxiv.org/abs/2411.06430} {\path{arXiv:2411.06430}}.

\bibitem{Roth1955}
Klaus~F. Roth.
\newblock Rational approximations to algebraic numbers.
\newblock {\em Mathematika}, 2:1--20, 1955.
\newblock \href {https://doi.org/10.1112/S0025579300000644} {\path{doi:10.1112/S0025579300000644}}.

\bibitem{Sta2012}
Richard~P. Stanley.
\newblock {\em Enumerative combinatorics. {Vol}. 1.}, volume~49 of {\em Camb. Stud. Adv. Math.}
\newblock Cambridge: Cambridge University Press, 2nd ed. edition, 2012.
\newblock URL: \url{www.cambridge.org/de/knowledge/isbn/item6832283/?site_locale=de_DE}.

\end{thebibliography}

\end{document}